\documentclass{amsart}
 \usepackage{amsmath,amsfonts,amssymb, amsthm, color, cite, mathrsfs}
\usepackage{epsfig}
\usepackage{verbatim}
\usepackage{booktabs}
\usepackage{multirow}

\usepackage[latin1]{inputenc}
\usepackage[T1]{fontenc}

\usepackage[english]{babel}

\usepackage{subfig}

\usepackage{graphicx} 

\usepackage{hyperref}



\usepackage{fancyhdr}

\newtheorem{thm}{\bf{Theorem}}[section]
\newtheorem{lemma}[thm]{Lemma}
\newtheorem{cor}[thm]{Corollary}
\newtheorem{prop}[thm]{Proposition}

\theoremstyle{remark}
\newtheorem{rem}[thm]{Remark}
\theoremstyle{definition}
\newtheorem{defin}[thm]{Definition}

\newtheorem{proof*}{Proof}
\newtheorem*{thm*}{Theorem}

\newcommand{\GL}{\operatorname{GL}}
\newcommand{\SL}{\operatorname{SL}}
\newcommand{\Pic}{\operatorname{Pic}}
\newcommand{\Bun}{\operatorname{Bun}}

\newcommand{\todo}[1]{\marginpar{{\color{red}\tiny{#1}}}}

\parindent0ex

\newcommand{\Hom}{\mathrm{Hom}}

\newcommand{\ov}{\overline}

\renewcommand{\ss}{\mathrm{ss}}

\providecommand{\gcd}{\mathrm{g.c.d.}}

\renewcommand{\L}{\mathcal{L}} \newcommand{\cL}{\mathcal{L}}

\newcommand{\cO}{\mathcal{O}}


\newcommand{\<}{\langle}
\renewcommand{\>}{\rangle}


\newcommand{\cF}{\mathcal{F}}

\newcommand{\Z}{\mathbb{Z}}
\newcommand{\Q}{\mathbb{Q}}

 \newcommand{\bT}{\mathbb{T}}

\newcommand{\bG}{\mathbb{G}}

\newcommand{\F}{\mathcal{F}}
\newcommand{\V}{\mathcal{V}}
\providecommand{\L}{\mathcal{L}}

\providecommand{\ss}{\mathsf{ss}}
\newcommand{\st}{\mathsf{st}}
\newcommand{\pos}{\mathsf{pos}}
\newcommand{\reg}{\mathsf{reg}}
\newcommand{\ad}{\mathsf{ad}}

\newcommand{\Lam}{\Lambda}

\newcommand{\PGL}{\operatorname{PGL}}
\newcommand{\proj}{\operatorname{proj}}

\newcommand{\g}{\mathfrak{g}}
\newcommand{\p}{\mathfrak{p}}


\newcommand{\ari}{\ar@{^{(}->}}
\newcommand{\aril}{\ar@{_{(}->}}
\newcommand{\cart}[1]{\ar@{}[#1]|\square }

\providecommand{\Sigma}{\mathfrak{S}}

\newcommand{\om}{\omega}

\newcommand{\I}{\mathcal{I}}

\newcommand{\oBun}{\overline{\Bun}}
\newcommand{\vLam}{\check{\Lambda}}

\newcommand{\valpha}{{\check{\alpha}}}

\newcommand{\vlam}{{\check{\lambda}}}
\newcommand{\lam}{\lambda}
\newcommand{\vmu}{{\check{\mu}}}



\title{On the stack of semistable $G$-bundles over an elliptic curve}
\author{Dragos Fratila}
\address{Max Planck Institut f\"ur Mathematik, 7 Vivatsgasse, Bonn 53111, Germany} 

 \email{dragos.fratila@gmail.com}


\includeonly{%
  introduction,
  } 


\numberwithin{equation}{section}

\input xy
\xyoption {all} 

\begin{document}

\begin{abstract}
In a recent paper Ben-Zvi and Nadler proved that the induction map from $B$-bundles of degree 0 to semistable $G$-bundles of degree 0 over an elliptic curve is a small map with Galois group isomorphic to the Weyl group of $G$. 
We generalize their result to all connected components of $\Bun_G$ for an arbitrary reductive group $G$. We prove that for every degree (i.e. topological type) there exists a unique parabolic subgroup such that any semistable $G$\nobreakdash-bundle of this degree has a reduction to it and moreover the induction map is small with Galois group the relative Weyl group of the Levi. This provides new examples of simple automorphic sheaves which are constituents of Eisenstein sheaves for the trivial local system. 
\keywords{$G$-bundles on elliptic curves, automorphic sheaves, Eisenstein sheaves, small morphisms.}
\end{abstract}

\maketitle

\normalsize

\setcounter{tocdepth}{1} 

\tableofcontents  

\section{Introduction}
The moduli spaces of semistable principal bundles on elliptic curves have received considerable attention in the past years. 
Starting with the result of Atiyah~\cite{Ati} who classified semistable $GL_n$ bundles and then continuing with the work of Tu~\cite{Tu_ell},  Laszlo~\cite{Lasz} and Friedman, Morgan, Witten \cite{FMW1, FMW2, FMW3} who classified and studied principal (semistable) bundles on elliptic curves and elliptic fibrations. 
The latter papers were mostly motivated by applications to physics, more precisely $F$-theory.
Although the result of Atiyah can be upgraded to a stacky statement using Fourier\nobreakdash-Mukai transforms, the other approaches for general reductive groups are for moduli spaces and not much attention has been given to moduli stacks.  
One shortcome of these approaches, unlike the case of $\GL_n$, is that one cannot apply these results in the study of geometric Eisenstein series for elliptic curves. 
Our initial motivation comes from the desire to use the geometry of the moduli stacks of semistable $G$-bundles in the classification of simple constituents of Eisenstein sheaves for an elliptic curve and for the trivial local system (see \cite{Sch2} for a treatment in the case of an elliptic curve and $\GL_n$ and \cite{Lau} for the projective line). 

\vspace{0.1in}
\vspace{0.1in}

More recently, Ben-Zvi and Nadler~\cite{BZN} started the study of the moduli stack of principal $G$-bundles of degree $0$ over an elliptic curve. 
They proved that the induction map from $B$-bundles of degree $0$ to semistable $G$-bundles of degree 0 is a small map with Galois group the Weyl group of $G$. 
Their motivation stems from the idea of constructing character sheaves for loop groups using principal bundles over a genus one curve (this was first suggested by V. Ginzburg).  
Indeed, some evidence for this comes from a result of Looijenga (unpublished, but see~\cite{BaGi_conj}) who proved that there is a bijection between the isomorphism classes of holomorphic principal $G$-bundles on an elliptic curve and (twisted) conjugacy classes in the holomorphic loop group. 
This is backed-up by the construction of character sheaves as the center of the Hecke category (see \cite{BeFiOs},\cite{BZN_cth}) and by the, currently developping, affine analog (see \cite{BZNP_affch}). 
Our motivation also comes, partly, from this perspective. 

The geometry studied in this note is one of the ingredients for the classification theorem of the simple constituents of spherical Eisenstein sheaves that we propose in~\cite{Frat_eis}. In particular, for every Harder-Narasimhan stratum of $\Bun_G$ we obtain simple automorphic sheaves supported on it (with monodromy given by representations of a relative Weyl group) and appearing as direct summands in the Eisenstein sheaves of the trivial local system.

\bigskip

Here is the statement of our main result (for notations see Section~\ref{sec_prelim}):

\begin{thm*}(Theorem~\ref{thm_main}) Let $X$ be an elliptic curve and $\vlam_G\in\vLam_{G,G}$ a degree (a cocharacter modulo the coroots). Then there exists a smallest parabolic subgroup $P$ and a unique degree $\vlam_P\in\vLam_{G,P}$ such that the induction map restricted to the semistable locus
\[
p:\Bun_P^{\vlam_P,\ss}\to \Bun_G^{\vlam_G,\ss} 
\]
is proper, small and generically Galois with Galois group the relative Weyl group $W_{M,G} = N_G(M)/M$ where $M$ is the Levi subgroup of $P$.
\end{thm*}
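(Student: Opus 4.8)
The plan is to establish the three geometric assertions---properness, smallness, and the generic Galois property---in turn, starting from two structural inputs recorded in Section~\ref{sec_prelim}: that $p$ is surjective, i.e.\ every semistable $G$-bundle of degree $\vlam_G$ admits a reduction to $P$ with Levi-degree $\vlam_P$; and that $\Bun_P^{\vlam_P,\ss}$ is smooth and irreducible. The latter I would deduce from the projection to the Levi $p_M\colon\Bun_P^{\vlam_P,\ss}\to\Bun_M^{\vlam_P,\ss}$, whose fibres are connected (being cut out inside iterated extensions by the Lie algebra $\u$ of the unipotent radical of $P$), together with the irreducibility of $\Bun_M^{\vlam_P,\ss}$ furnished by the classification of semistable $M$-bundles on $X$. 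This reduces the theorem to analysing a proper, generically finite morphism out of a smooth irreducible stack.

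For properness I would verify the valuative criterion over a discrete valuation ring $R$ with fraction field $K$: given a family of semistable $G$-bundles of degree $\vlam_G$ over $\Spec R$ carrying a degree-$\vlam_P$ reduction to $P$ over $\Spec K$, one must show that the reduction extends over the closed point and remains semistable of degree $\vlam_P$. The minimality and canonicity of $(P,\vlam_P)$ are exactly what make this work: semistability of the special fibre bounds the degree of any $P$-reduction from above by $\vlam_P$, so the limiting reduction cannot jump to a strictly larger degree, and by the choice of $P$ as the smallest parabolic it cannot factor through a smaller one; boundedness of semistable $P$-bundles of fixed degree then forces it to extend. I expect this step to be essentially formal once the numerical characterization of $(P,\vlam_P)$ and the requisite boundedness are imported from the preliminaries.

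To pin down the generic fibre, let $U\subseteq\Bun_G^{\vlam_G,\ss}$ be the open substack of regular semistable bundles. For $E\in U$ the automorphism group is the torus $Z(M)$, and I claim that the degree-$\vlam_P$ reductions of $E$ to $P$ form a finite set on which $W_{M,G}=N_G(M)/M$ acts simply transitively---with $M$, hence $Z(M)$, fixing each reduction while $N_G(M)$ permutes them freely and transitively. Granting this, $p^{-1}(U)\to U$ is a $W_{M,G}$-torsor as a morphism of stacks, so $p$ is generically Galois with group $W_{M,G}$. I would prove the transitivity and freeness by realizing every reduction as the $W_{M,G}$-translate of a distinguished one, read off from the structure of regular semistable bundles in Section~\ref{sec_prelim}.

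The genuine difficulty is smallness. I would stratify $\Bun_G^{\vlam_G,\ss}$ by automorphism type---equivalently, by the conjugacy class of the canonical reduction of the underlying bundle---so that over each non-generic stratum $S$ a bundle acquires extra automorphisms and the fibre of $p$ correspondingly gains dimension; I would bound $\dim p^{-1}(E)$ for $E\in S$ in terms of the cohomology $H^\bullet(X,-)$ of the associated $\u$-bundle. The crux is then the inequality
\[
\operatorname{codim}\bigl(S\subseteq\Bun_G^{\vlam_G,\ss}\bigr)\;>\;2\dim p^{-1}(E),\qquad E\in S,
\]
for every such $S$. Because $\dim X=1$ and the canonical bundle of $X$ is trivial, both quantities are expressible through root data on the $W$-variety $\Pic^0(X)\otimes_{\Z}\vLam$: the codimension of $S$ counts the root line bundles that become trivial along $S$ (the source of the extra automorphisms), while the fibre dimension is governed by the same roots entering $\u$. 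Verifying that the codimension always exceeds twice the fibre dimension---the analogue of the Ben-Zvi--Nadler estimate for $(B,\vlam_P=0)$, now with a nonabelian Levi $M$ and a nontrivial twisting degree $\vlam_P$---is where the real work lies; I anticipate it reduces, after passing to the coarse moduli of semistable $M$-bundles, to a finite root-count for each standard Levi between $M$ and $G$, which the $M$-regularity of $\vlam_P$ should render strict with room to spare.
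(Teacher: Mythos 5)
Your outline has the right overall shape (properness via degree rigidity of limits, a generic $W_{M,G}$-covering, smallness via dimension control), but two of its three pillars rest on inputs that are not actually available in the preliminaries and that constitute the real content of the theorem. First, the generic Galois property: you assert that the degree-$\vlam_P$ reductions of a regular semistable bundle form a set on which $W_{M,G}$ acts simply transitively, to be ``read off from the structure of regular semistable bundles in Section~\ref{sec_prelim}'' --- but no such structure theory is there. The substantive point is that any two admissible $P$-reductions of a semistable bundle are in relative position $w\in N_G(M)$; this is exactly where the minimality of $P$ from Lemma~\ref{the_parabolic} enters (a relative position $w\notin N_G(M)$ would produce, via Schieder's deeper reductions, an admissible reduction to a strictly smaller parabolic $Q\subsetneq P$, contradicting minimality --- this is Lemma~\ref{rel_pos_normalizer}), and you give no argument for it. Likewise the generic \emph{freeness} of the $W_{M,G}$-action is not automatic: it requires the separate argument of Section~\ref{S:Weyl gen free} (reducing to the torus $M/[M,M]$ and exhibiting a line bundle moved by each $\gamma\neq 1$); without it one only gets a covering group that is a quotient of $W_{M,G}$.

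Second, smallness, which you correctly flag as the crux, is left entirely open in your proposal, and the stratified estimate $\operatorname{codim}(S)>2\dim p^{-1}(E)$ you propose to verify by root counts is not how the difficulty resolves. The decisive simplification is that on a genus-$1$ curve $\dim\Bun_H=(g-1)\dim H=0$ for \emph{every} linear algebraic group $H$. Sorting $\Bun_P^{\vlam_P,\ss}\times_{\Bun_G}\Bun_P^{\vlam_P,\ss}$ by relative position, only $w\in W_{M,G}$ occur by Lemma~\ref{rel_pos_normalizer}, and the piece indexed by $w$ is the stack $\Bun_{P\cap wPw^{-1}}^{\vlam_P,\ss}$, which is connected and of dimension $0$; hence every irreducible component has dimension exactly $\dim\Bun_G^{\vlam_G,\ss}=0$ and one only needs each to dominate, which follows from generic \'etaleness of $p$ over the (nonempty) regular locus together with the free $W_{M,G}$-action there. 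No codimension-versus-fiber-dimension inequality needs to be established, and the picture of fibers ``gaining dimension'' over deeper strata is misleading here. Your properness step is essentially the paper's argument (Proposition~\ref{proper_map}): the limit of a $P$-reduction exists as a generalized Drinfeld reduction, and the identity $\phi_P(\vlam_P)=\phi_G(\vlam_G)$ combined with semistability of the special fibre forbids the degree from jumping; but ``boundedness forces it to extend'' should be replaced by the actual mechanism, namely extending the section $X\to\F_G/P$ across the codimension-$2$ indeterminacy locus of the surface $X\times\Spec R$, i.e.\ passing through $\oBun_P$.
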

The parabolic $P$ and the degree $\vlam_P$ are unique (up to conjugation) and are given by Lemma~\ref{the_parabolic}.

In the case of degree $0$ the parabolic in question is the Borel subgroup and we recover the result of Ben-Zvi and Nadler~\cite{BZN}. 

For example, if the group is $G=\GL_6$ and the degree is $2$ then the parabolic is formed by the upper block-matrices $3\times 3$ and the degree is $(1,1)$. 
If the degree is $3$ then the parabolic is formed by the (upper) block-matrices $2\times 2$ and the degree is $(1,1,1)$. 
In the case of $\GL_n$ 
it is not difficult to find the parabolic based on the $\gcd(n,d)$ and our main theorem for $\GL_n$ follows easily from some dimension estimates that are spelled out in~\cite[Proposition 4.3.1]{Lau}. 

We refer to the table at the end of the paper where the full list of Levi subgroups is provided.

\begin{rem}
In the case of $\GL_n$ it makes sense to talk about Jordan-H\"older series in the category of semistable vector bundles of fixed slope. 
The above theorem provides also a generalization in the context of semistable $G$-bundles of this Jordan-H\"older series. 
This idea has already appeared in the pioneering work of Ramanathan, see \cite[Section 3]{Ram-modI}.
\end{rem}  
  
\vspace{0.1in}

The Levi subgroups that appear in our main theorem are exactly those Levi from the generalized Springer correspondence (see \cite{Lu_gen_spr}) for the Langlands dual group $\vphantom{G}^\mathsf{L}\! G$ that admit a cuspidal local system on the unipotent regular orbit. At the moment we are unable to understand the precise reasons for this mysterious coincidence.  
There is a simple combinatorial description of these Levi subgroups which is given in \cite{Bon} as well as a complete classification.
Our Levi subgroups are also defined combinatorially and it can be proved (elementary) that the combinatorial problem that defines them (Lemma~\ref{the_parabolic}) is equivalent to the combinatorial problem studied in~\cite{Bon}.
I'm grateful to C. Bonnaf\'e who explained this to me. 

\bigskip

Let us briefly outline the contents of this note. 
In Section 2 we recall some basic facts about the stacks of principal bundles over curves, we introduce the slope map $\phi_P$ (following~\cite{Schi}) and we prove the main combinatorial lemma (Lemma~\ref{the_parabolic}). 
In Section 3 we state our main result and prove it through a series of lemmas, some of which apply to curves of arbitrary genus and hence could be of independent interest. 
In Section 4 we (re)prove, as a simple application of our main result, that over an elliptic curve there are no stable $G$-bundles unless $G$ is of type $A$ (see Corollary~\ref{cor_stable}). We also provide a table including all possible Levi subgroups appearing in our main Theorem~\ref{thm_main}.


\vspace{0.1in}

\section{Preliminaries}\label{sec_prelim}
\subsection{Notations and conventions}
We will work over an algebraically closed field $k$ of characteristic 0. 
Throughout $X$ will be an arbitrary smooth projective curve, geometrically irreducible. 
We will emphasize the precise results where we need $X$ to be of genus 1. 

By $G$ we will denote an arbitrary reductive group over $k$. We will fix a maximal torus and a Borel subgroup $T\subset B\subset G$.

We will denote by $\Lam_G=\Hom(T,\bG_m)$ and $\vLam_G=\Hom(\bG_m,T)$ the lattices of characters, respectively cocharacters of $T$ (in \cite{BG1} the notations are interchanged). 

The roots of $G$, denoted here by $\Phi$, are the non-trivial characters of $T$ appearing in the representation of $T$ on $\g$. The choice of a Borel subgroup gives us a partition of $\Phi=\Phi^+\sqcup \Phi^-$ into positive and negative roots as well as a set of simple positive roots $\{\alpha_i:\, i\in \I\}\subset \Phi^+$ where $\I$ is the set of vertices of the Dynkin diagram. Similarly, we denote by $\{\valpha_i:\, i\in \I\}$ the set of simple coroots.

The parabolic subgroups of $G$ that contain $B$ are in bijection with the subsets of $\I$. A parabolic subgroup has a reductive Levi quotient $M:=P/R_u(P)$ where $R_u$ is the unipotent radical of $P$. For a parabolic subgroup $P$ we will denote by $\I_P\subset \I$ the set of simple roots of its Levi $M$. Conversely, any subset of $\I$ defines a parabolic subgroup of $G$ that contains the Borel subgroup $B$. 
These parabolics are called standard and it is well known that any parabolic subgroup is conjugated to one of the standard ones. 
The Levi quotient $M$ has a canonical splitting $M\to P$ coming from the fact that we fixed a maximal torus and a Borel subgroup (it is constructed using the $\SL_2$-triples associated to $i\in \I_P$). 
We will view $M$ either as a quotient of $P$ or as a subgroup of $P$ using this splitting.

\vspace{0.3cm}

We put $\vLam_{G,P} := \vLam_G\slash \mathrm{span}\{\valpha_i:i\in\I_P\}$ and $\Lam_{G,P} := \{\lambda\in\Lam_G: \<\lambda,\valpha_i\> = 0,\forall i\in\I_P\}$. 
Whenever we adorn the above $\Z$-modules by an upper index $\Q$ it means that we tensor them over $\Z$ with $\Q$. We fix fundamental weights $\omega_i\in \Lam_G^{\Q}$ such that $\<\omega_i,\vLam_G\>\in\Z$ (this is automatic for $G$ semisimple but when $G$ is reductive the characters of the center give more freedom). 

The semigroup of dominant characters is denoted by $\Lam_G^+=\{\lam\in\Lam_G: \<\lam,\valpha_i\>\ge 0\}$ and for positive characters we use $\Lam_G^\pos:=\Z_+\{\alpha_i\mid i\in I\}$. Similarly one defines $\vLam_G^\pos$. We denote by $\vLam_G^{\pos,\Q}=\Q_+ \cdot\vLam_G^{\pos}\subset \vLam_G^\Q$ the rational cone of positive coweights. 
The latter induces a partial order on $\vLam_G^\Q$: $\vlam\le \vmu$ if $\vmu-\vlam\in\vLam_G^{\pos,\Q}$. 
The Weyl module of $G$ of highest weight $\lam\in\Lam_G^+$ is denoted by $V^\lam$. 
Also, when $\lam$ is a character of one of the groups $T,P$ or $G$ we denote by $L^\lam$ the corresponding one\nobreakdash-dimensional representation.

\vspace{0.3cm}

For an affine group $H$, principal $H$-bundles over a scheme (or a stack) $S$ are to be understood as schemes (or stacks) $Y$ together with a right action of $H$ and an $H$-invariant map $p:Y\to S$ such that, locally (in the fppf topology on $S$), $p:Y\to S$ is equivariantly isomorphic to $p_2:S\times H\to S$.

We will denote by $BH$ the classifying stack of $H$-bundles. It is the functor that to a scheme $S$ associates the groupoid of $H$-bundles on $S$ where all the morphisms are isomorphisms of $H$-bundles.

We denote by $\Bun_H=\underline{Hom}(X,BH)$ the moduli stack of $H$-bundles on $X$: the $S$-points of $\Bun_H$ are $H$-bundles on $X\times S$ (note that it is automatically flat over $S$) together with isomorphisms of $H$-bundles.

The connected components of the stack $\Bun_G$ are in bijection with the set $\vLam_{G,G}$. Similarly, the connected components of the stack $\Bun_P$ are in bijection with $\vLam_{G,P}=\vLam_{M,M}$ where $M$ is the Levi of $P$. For a nice treatment of these results we refer to \cite{Hoff}. 

We will call an element of $\vLam_{G,P}$ a degree and refer to the degree of a principal $P$-bundle $\F_P$ as the element  $\vlam_P$ such that $\F_P\in\Bun_P^{\vlam_{P}}$.

\bigskip

If $\F_G$ is a principal $G$-bundle and $V$ a representation of $G$ then we will denote by $V_{\F_G} = \F_G\overset{G}{\times}V = (\F_G\times V)/G$ the associated vector bundle, where $\F_G\times V$ is endowed with the diagonal action of $G$. 

More generally, if $f:H\to G$ is a morphism of algebraic groups and $\F_H$ is an $H$-bundle then one can induce it to a $G$-bundle $\F_G:=\F_H\overset{H}{\times}G = (\F_H\times G)/H$ where $H$ acts diagonally. 
We will use this construction for the morphisms $P\to M, M\to P, P\to G$ where $P$ is a parabolic subgroup of $G$ and $M$ is its Levi quotient/subgroup.

\begin{defin}\label{D:reduction str gp} (reduction of structure group) Let $f:H\to G$ be a morphism of algebraic groups and let $\F_G$ be a $G$-bundle. A reduction (of structure group) of $\F_G$ to $H$ is a pair $(\F_H,\varphi)$ of an $H$-bundle together with an isomorphism of $G$-bundles $\varphi:\F_H\overset{H}{\times}G\to \F_G$.
\end{defin}

\subsection{The slope map} 

When beginning to learn about semistable $G$-bundles the first thing one remarks is that the definition of semistability doesn't resemble at all the classical one for vector bundles. 
The slope for vector bundles is particularly useful in defining and studying semistable vector bundles because it is intuitive and easy to define and work with. 
Ramanathan has given in~\cite{Ram-stable} several definitions of semistability which generalize naturally to any reductive group. Behrend has given a treatment of semistability and canonical reduction (Harder-Narasimhan reduction) in \cite{Beh_sstab} valid more generally for group schemes over curves. 

We will adopt here a slightly different point of view, due to Schieder~\cite{Schi}, which is closer in spirit to the vector bundles situation and also equivalent to the semistability as defined by Ramanathan. 
More precisely, Schieder defines a slope for $G$-bundles (where $G$ is an arbitrary reductive group) and then he mimicks the classical definition of semistability. 
He proves that the two notions are equivalent.  
We refer to \cite[Lemma 3.3]{Schi} for details. 
For the convenience of the reader we recall here how the slope is defined as well as some of its basic properties and we refer to loc. cit. for a full discussion.

\begin{defin}\label{D:slope map}
Let $P$ be a parabolic and denote by $M$ its Levi subgroup. The inclusion $\vLam_{Z(M)}^\Q\hookrightarrow \vLam_G^\Q$ followed by the projection onto $\vLam_{G,P}^\Q$ is an isomorphism. The slope map $\phi_P$ is defined as the composition
\[
 \phi_P:\vLam_{G,P}\to\vLam_{G,P}^\Q\simeq \vLam_{Z(M)}^\Q\hookrightarrow\vLam_G^\Q.
\]
\end{defin}
This slope map appears, although not very explicitly, in the paper \cite{AtiBott}, section 10. Their definition is only for a particular type of parabolic but clearly it works for any parabolic. 
They do not give particular attention to this map though and do not develop the combinatorial foundations as in \cite{Schi}. I thank Alexandru Chirvasitu for bringing this reference to my attention.

Let us give some examples on how this slope map works in order to familiarize the reader with the notion.

\begin{enumerate}
 \item Let $G = \GL_n$ and let $\V$ be a vector bundle of degree $d$. We write $\vlam_i$ for the coordinate cocharacters of $T$. The coroots are given by $\valpha_i = \vlam_i-\vlam_{i+1}$. 
 Then the degree of $\V$, viewed as an element in $\vLam_{G,G}=\oplus_i \Z\vlam_i/\<\valpha_i\mid 1\le i\le n\>$, is $\vlam_G = d\vlam_1 =\dots= d\vlam_n$. 
 The slope of $\V$ is \[
    \phi_G(\vlam_G) = \frac{d}{n}(\vlam_1+\dots+\vlam_n).
 \]
The appearence of $\frac1n$ is due to the inverse of the map $\vLam_{Z(M)}^\Q\to \vLam_{G,P}^\Q$. 
 
 \item The same as before, just that this time we look at a subbundle $\mathcal{W}\subset \V$ where $\mathcal{W}$ is of rank $m$ and degree $e$. 
 This data is equivalent to a reduction $\F_P$ of $\V$ to the maximal parabolic of $\GL_n$ corresponding to the simple root $\alpha_{m}$. 
 The degree of $\F_P$ is $\vlam_P = e\vlam_1+(d-e)\vlam_{m+1}\in\vLam_{G,P}$ and the slope is
 \[
  \phi_P(\vlam_P) = \frac em(\vlam_1+\dots+\vlam_m)+\frac{d-e}{n-m}(\vlam_{m+1}+\dots+\vlam_n).
 \]
 It is clear how one generalizes this example to a multi-step flag.
 \item More generaly, let $\F_G$ be a $G$-bundle of degree $\vlam_G$ and let $V$ be a highest weight representation of $G$ of weight $\lambda$. 
 Then we have the following equation (see \cite[Proposition 3.2]{Schi})
 \begin{equation}\label{E:slope induced vbdle}
  \mu(V_{\F_G}) = \<\phi_{G}(\vlam_G),\lambda\>,
 \end{equation}
 where $\mu$ denotes the usual slope (i.e. degree divided by rank) for vector bundles.
\end{enumerate}

\begin{defin}(Semistability, see \cite[Section 2.2.3]{Schi} ) Let $\F_G$ be a $G$-bundle of degree $\vlam_G\in\vLam_{G,G}$. 
Then we call $\F_G$ semistable if for any element $\vlam_P\in\vLam_{G,P}$ and for any reduction (see Definition~\ref{D:reduction str gp}) $\F_P$ of $\F_G$ to $P$ of degree $\vlam_P$ we have
\[
 \phi_P(\vlam_P)\le\phi_G(\vlam_G),
\]
where we recall that $\le$ is understood in the sense of the partial order on $\vLam_G^\Q$ induced by the positive cocharacters.

We say moreover that $\F_G$ is \emph{stable} if strict inequality holds in the above for proper parabolics.  
\end{defin}

Let us introduce some notations. 
We will denote by $\Bun_G^{\vlam_G}$ the stack of $G$\nobreakdash-bundles of degree $\vlam_G$ and similarly for other groups. 
We also let $\Bun_G^{\vlam_G,\ss}$ stand for the (open, dense) substack of semistable bundles. 
For a parabolic $P$ we denote by $\Bun_P^{\vlam_P,\ss}$ the preimage of semistable $G$\nobreakdash-bundles\footnote{Warning: in \cite{Schi} the superscript $\ss$ doesn't have the same meaning.}.

\subsection{Deeper reductions}\label{sec_deepred} When we deal with several reductions to parabolic subgroups it is always important to look at the relative position of the reductions. 
We will recall here the deeper reductions which were constructed in \cite[Section 4.2]{Schi} since we will use them frequently. 

Let $P_1,P_2$ be two parabolic subgroups of $G$ and let us denote by $\I_1$ and by $\I_2$ their associated vertices in the Dynkin diagram. 
The Weyl groups of their Levi's will be denoted by $W_1$ respectively $W_2$. 
We also put $W_{1,2}\subset W$ to be a system of representants of minimal length of $W_1\backslash W\slash W_2$.

An element of $\Bun_{P_1}\times_{\Bun_G}\Bun_{P_2}$ gives a natural map $X\times S \to BP_1\times_{BG} BP_2\simeq P_1\backslash G\slash P_2$ and hence for two reductions $\F_{P_1},\F_{P_2}$ of a $G$-bundle $\F_G$ on $X$ we obtain a map
\[
 X\to P_1\backslash G\slash P_2 = \bigsqcup_{w\in W_{1,2}} P_1\backslash P_1 w P_2\slash P_2.
\]

\begin{defin}
We say that two reductions $\F_P,\F'_P$ are in relative position $w$ if the above map factorizes through $P_1\backslash P_1 w P_2\slash P_2$. 
If this happens only generically on $X$ then we say that they are generically in relative position $w$.
\end{defin}

For a fixed $w\in W_{1,2}$ let us define the following sets of roots:
\begin{align}
 \I_1' = \{i\in \I_1\mid \exists j\in\I_2: w(\alpha_j)=\alpha_i\}\\
 \I_2'=\{i\in \I_2\mid \exists j\in\I_1: w^{-1}(\alpha_j)=\alpha_i\}
\end{align}

To $\I_1',\I_2'$ we associate the parabolics $Q_1\subseteq P_1$ and $Q_2\subseteq P_2$. We denote by $L_1,L_2$ their corresponding Levi subgroups. Remark that the conjugation by $w$ sends $L_1$ isomorphically onto $L_2$.

The following Proposition (see \cite[Corrolary 4.1]{Schi}) establishes the existence of deeper reductions.

\begin{prop}\label{prop_deepred} If $\F_{P_1}$ and $\F_{P_2}$ are two reductions of a $G$-bundle which are (generically) in relative position $w$ then there exist reductions $\F_{Q_1}$ of $\F_{P_1}$ and $\F_{Q_2}$ of $\F_{P_2}$ such that $\F_{Q_1}$ and $\F_{Q_2}$ are still (generically) in relative position $w$ and such that their induced Levi bundles $\F_{L_1}$ and $\F_{L_2}$ are naturally isomorphic when $L_1$ and $L_2$ are identified via the conjugation by $w$.
\end{prop}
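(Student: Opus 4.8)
The plan is to reinterpret a pair of reductions in relative position $w$ as a single reduction to the (non-parabolic) subgroup $H:=P_1\cap\lexp{w}{P_2}$, and then to extract both deeper reductions and the Levi identification from the internal structure of $H$. Recall that the $G$-orbits on $G/P_1\times G/P_2$ are indexed by $W_{1,2}$, the orbit $\OO_w$ of the base point $(eP_1,wP_2)$ having stabiliser exactly $\mathrm{Stab}_G(eP_1,wP_2)=P_1\cap\lexp{w}{P_2}=H$, so $\OO_w\simeq G/H$. A pair of reductions $(\F_{P_1},\F_{P_2})$ of $\F_G$ is the same as a section of $\F_G\overset{G}{\times}(G/P_1\times G/P_2)$ over $X$, and the condition of being (generically) in relative position $w$ says precisely that this section (generically) factors through $\F_G\overset{G}{\times}\OO_w=\F_G\overset{G}{\times}(G/H)$; that is, it is the same datum as an $H$-reduction $\F_H$ of $\F_G$ refining both $\F_{P_1}$ and $\F_{P_2}$.

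The heart of the matter is the group-theoretic claim, which is where the minimality of $w$ in its double coset enters: $H\subseteq Q_1$ and $H\subseteq\lexp{w}{Q_2}$, and the Levi of $H$ is exactly $L_1=\lexp{w}{L_2}$. I would prove this with root systems relative to $T$. Since $w$ is the minimal length representative of $W_{1}wW_{2}$ one has $w^{-1}(\I_1)\subset\Phi^+$ and $w(\I_2)\subset\Phi^+$; hence $w^{-1}$ maps every positive root of $M_1$ to a positive root and $w$ maps every positive root of $M_2$ to a positive root. The roots of $\mathfrak{p}_1$ are $\Phi^+\cup(\Phi^-\cap\Phi_{M_1})$ and those of $\lexp{w}{\mathfrak{p}_2}$ are $w\big(\Phi^+\cup(\Phi^-\cap\Phi_{M_2})\big)$. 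If $\gamma$ is a root of $\mathfrak{p}_1\cap\lexp{w}{\mathfrak{p}_2}$ which is negative, then $\gamma\in\Phi^-\cap\Phi_{M_1}$, so $-\gamma$ is a positive root of $M_1$ and $w^{-1}(-\gamma)\in\Phi^+$, i.e. $w^{-1}\gamma\in\Phi^-$; since also $w^{-1}\gamma$ is a root of $\mathfrak{p}_2$, it lies in $\Phi^-\cap\Phi_{M_2}$, whence $\gamma\in\Phi_{M_1}\cap w\Phi_{M_2}$. The standard lemma on reduced double coset representatives then gives $\Phi_{M_1}\cap w\Phi_{M_2}=\Phi_{\I_1\cap w(\I_2)}=\Phi_{\I_1'}=\Phi_{L_1}$. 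Thus every negative root of $H$ lies in $\Phi_{L_1}$, which simultaneously shows $\mathfrak{h}\subseteq\mathfrak{q}_1$ and identifies the Levi of $H$ with $L_1$; applying $w^{-1}$ gives $\mathfrak{h}\subseteq\lexp{w}{\mathfrak{q}_2}$ together with $\lexp{w}{L_2}=L_1$. Passing from Lie algebras to the connected groups (all containing $T$) yields the claim. I expect this combinatorial step to be the main obstacle, precisely because it rests on the reducedness of $w$; the rest is formal.

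Granting the claim, I would produce the deeper reductions by induction: set $\F_{Q_1}:=\F_H\overset{H}{\times}Q_1$ along $H\hookrightarrow Q_1$, and $\F_{Q_2}:=\F_H\overset{H}{\times}Q_2$ along the homomorphism $H\to Q_2,\ h\mapsto w^{-1}hw$ (which lands in $Q_2$ because $H\subseteq\lexp{w}{Q_2}$); equivalently, these are the reductions attached to composing the section $\F_H$ with $G/H\to G/Q_1,\ gH\mapsto gQ_1$ and $G/H\to G/Q_2,\ gH\mapsto gwQ_2$. Compatibility with the projections $G/Q_i\to G/P_i$ and the choice of base points shows $\F_{Q_1}$ refines $\F_{P_1}$ and $\F_{Q_2}$ refines $\F_{P_2}$; and since the induced section of $\F_G\overset{G}{\times}(G/Q_1\times G/Q_2)$ factors through the orbit of $(eQ_1,wQ_2)$, the pair $(\F_{Q_1},\F_{Q_2})$ is (generically) in relative position $w$ (note that $w$ stays minimal in $W_{L_1}wW_{L_2}$). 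Finally, both projections $H\hookrightarrow Q_1\twoheadrightarrow L_1$ and $H\hookrightarrow\lexp{w}{Q_2}\twoheadrightarrow\lexp{w}{L_2}=L_1$ are the projection of $H$ onto its own Levi $L_1$ (the unipotent radical $R_u(H)$ lies in $R_u(Q_1)$ and in $R_u(\lexp{w}{Q_2})$ by the root computation), hence they coincide. Consequently $\F_{L_1}=\F_{Q_1}\overset{Q_1}{\times}L_1=\F_H\overset{H}{\times}L_1$ and, after transporting through the isomorphism $L_2\xrightarrow{\sim}L_1$ given by conjugation by $w$, also $\F_{L_2}=\F_H\overset{H}{\times}L_1$; the two Levi bundles are therefore canonically isomorphic.

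It remains to handle the word \emph{generically}. If the two reductions are in relative position $w$ everywhere, the section $X\to\F_G\overset{G}{\times}(G/H)$ is defined on all of $X$ and the construction above is carried out integrally. If they are only generically in relative position $w$, then $\F_H$, and hence $\F_{Q_1},\F_{Q_2}$, are a priori defined only over a dense open $U\subseteq X$; but the flag bundles $\F_G\overset{G}{\times}(G/Q_i)$ are proper over the smooth curve $X$, so a rational section over $U$ extends uniquely to a morphism on all of $X$, yielding reductions $\F_{Q_1},\F_{Q_2}$ on $X$ that are generically in relative position $w$ and whose associated Levi bundles are identified over $U$.
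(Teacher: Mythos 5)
Your argument is essentially correct, but note that the paper does not prove this proposition at all: it is quoted from Schieder, \cite[Corollary 4.1]{Schi}, where the deeper reductions are likewise built from the Bruhat cell $P_1wP_2$ but are developed through the Tannakian/Pl\"ucker description of reductions used for the Drinfeld--type compactifications. What you supply is a self-contained, purely group-theoretic proof: you package the pair of reductions in relative position $w$ as a single reduction to $H=P_1\cap wP_2w^{-1}$ via the orbit identification $\mathcal{O}_w\simeq G/H$, and then everything reduces to the combinatorial facts that, for $w$ minimal in $W_1wW_2$, one has $\Phi_{M_1}\cap w\Phi_{M_2}=\Phi_{\I_1'}$ (Kilmoyer's theorem on reduced double coset representatives) and hence $H\subseteq Q_1$, $H\subseteq wQ_2w^{-1}$, with reductive quotient $L_1=wL_2w^{-1}$ and $R_u(H)$ contained in both unipotent radicals, so that the two projections $H\to L_1$ coincide. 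This is a clean and correct route (you do implicitly use that $P_1\cap wP_2w^{-1}$ is connected with Lie algebra $\p_1\cap w(\p_2)$, and that a representative of $w$ in $N_G(T)$ has been fixed, but both are standard and harmless in characteristic $0$); what it buys is independence from the machinery of \cite{Schi}, at the cost of invoking the double-coset lemma as a black box.

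One point deserves emphasis. In the ``generically'' case your construction produces the isomorphism of Levi bundles only over the dense open $U$ where the section lands in $\mathcal{O}_w$, and you are right not to claim more: an isomorphism over all of $X$ would force equality of degrees and hence equality in Lemma~\ref{L:slope_deep_red_w}, whereas that lemma (and its use in Lemma~\ref{lem_Msst_Gsst}) only asserts an inequality. So the parenthetical ``(generically)'' in the statement must be read as qualifying the isomorphism of Levi bundles as well; with that reading your proof is complete.
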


\subsection{Properties of the slope map}

For the reader's convenience we collect under some lemmas a few of the  fundamental properties of the slope map from~\cite{Schi} that we will use in this note.

\begin{lemma}\label{L:slope_scalar}(\cite[Remark end of Section 2.1]{Schi}) The slope map has the following properties
\[ \< \vlam_P,\lam_P \> = \<\phi_p(\vlam_P),\lam_P\>, \, \forall \lam_P\in\Lam_{G,P}, \]
\[
\<\phi_P(\vlam_P),\alpha_i\> = 0, \,\forall i\in \I_M.
\]
\end{lemma}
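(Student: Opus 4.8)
The plan is to unwind Definition~\ref{D:slope map} and reduce both identities to elementary linear algebra in $\vLam_G^\Q$. Recall that $\phi_P(\vlam_P)$ is, by construction, the unique element of $\vLam_{Z(M)}^\Q$ whose image under the projection $\pi:\vLam_G^\Q\tto\vLam_{G,P}^\Q=\vLam_G^\Q/\span_\Q\{\valpha_i:i\in\I_P\}$ equals $\vlam_P$ (viewed in $\vLam_{G,P}^\Q$). The two bookkeeping facts I will use are: (i) the connected center $Z(M)^\circ$ is the identity component of $\bigcap_{i\in\I_M}\ker\alpha_i$, so that $\vLam_{Z(M)}^\Q=\{\vmu\in\vLam_G^\Q:\<\alpha_i,\vmu\>=0\ \forall i\in\I_M\}$; and (ii) the natural pairing $\vLam_{G,P}\times\Lam_{G,P}\to\Z$ is well defined, because any $\lam_P\in\Lam_{G,P}$ annihilates precisely the coroots $\valpha_i$, $i\in\I_P$, that are quotiented out in $\vLam_{G,P}$.

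For the second identity, fact (i) gives it immediately: since $\phi_P(\vlam_P)\in\vLam_{Z(M)}^\Q$ by definition and $\I_M=\I_P$, we get $\<\phi_P(\vlam_P),\alpha_i\>=0$ for every $i\in\I_M$. This is the only place where the choice of target $\vLam_{Z(M)}^\Q$ (as opposed to an arbitrary lift of $\vlam_P$) is used.

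For the first identity, I would fix any lift $\tilde{\vlam}_P\in\vLam_G$ of $\vlam_P$. By the defining property of $\phi_P$ recalled above, $\phi_P(\vlam_P)$ and $\tilde{\vlam}_P$ have the same image under $\pi$, hence $\phi_P(\vlam_P)-\tilde{\vlam}_P\in\span_\Q\{\valpha_i:i\in\I_P\}$; write it as $\sum_{i\in\I_P}c_i\valpha_i$ with $c_i\in\Q$. Pairing with $\lam_P\in\Lam_{G,P}$ and using $\<\lam_P,\valpha_i\>=0$ for $i\in\I_P$ yields
\[ \<\phi_P(\vlam_P),\lam_P\>=\<\tilde{\vlam}_P,\lam_P\>+\sum_{i\in\I_P}c_i\<\lam_P,\valpha_i\>=\<\tilde{\vlam}_P,\lam_P\>=\<\vlam_P,\lam_P\>, \]
the last equality being exactly the well-definedness from fact (ii), i.e. the independence of the pairing from the chosen lift.

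There is essentially no obstacle here: both statements are formal consequences of the definition of $\phi_P$ together with the description of $\vLam_{Z(M)}^\Q$ as the coweights orthogonal to the roots of $M$. The only points that require a word of care are verifying that $\vLam_{Z(M)}^\Q$ admits the stated orthogonality description and that the pairing between the quotient lattice $\vLam_{G,P}$ and its annihilator-type dual $\Lam_{G,P}$ is well defined; both are standard and, as expected, independent of the genus\nobreakdash-$1$ hypothesis.
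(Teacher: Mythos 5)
Your argument is correct. Note that the paper itself gives no proof of this lemma --- it is quoted directly from Schieder's paper (the remark at the end of Section~2.1 of \cite{Schi}) --- and your verification is exactly the standard unwinding of Definition~\ref{D:slope map} that the cited reference relies on: the second identity is the statement that $\phi_P(\vlam_P)$ lands in $\vLam_{Z(M)}^\Q$, the rational coweights orthogonal to the simple roots of $M$, and the first follows because $\phi_P(\vlam_P)$ differs from any integral lift of $\vlam_P$ by a rational combination of the coroots $\valpha_i$, $i\in\I_P$, which every $\lam_P\in\Lam_{G,P}$ annihilates by definition. Both observations, and your remark that none of this uses the genus-one hypothesis, are accurate.
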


\begin{lemma}\label{L:slope_order}(\cite[Proposition 3.1]{Schi})
The map $\phi_P:\vLam_{G,P}^\Q \to \vLam_G^{\Q}$ preserves the natural partial orderings.
\end{lemma}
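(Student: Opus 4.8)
The plan is to exploit the $\Q$-linearity of $\phi_P$ and reduce the order-preservation to a single positivity statement about the simple coroots. Recall that $\phi_P$ is, by Definition~\ref{D:slope map}, the $\Q$-linear isomorphism $\vLam_{G,P}^\Q\xrightarrow{\sim}\vLam_{Z(M)}^\Q$ followed by the inclusion into $\vLam_G^\Q$. Since $\vlam_P\le\vmu_P$ means exactly $\vmu_P-\vlam_P\in\vLam_{G,P}^{\pos,\Q}$, linearity reduces the lemma to showing that $\phi_P$ sends the positive cone of the source into $\vLam_G^{\pos,\Q}$. Now $\vLam_{G,P}^{\pos,\Q}$ is by definition the image of $\vLam_G^{\pos,\Q}=\Q_+\{\valpha_i:i\in\I\}$ under the projection $\vLam_G^\Q\to\vLam_{G,P}^\Q$; as the classes of $\valpha_i$ vanish for $i\in\I_P$, this cone is generated by the classes $\overline{\valpha_j}$ with $j\in\I\setminus\I_P$. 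So the entire statement comes down to proving
\[
\phi_P(\overline{\valpha_j})\in\vLam_G^{\pos,\Q}\qquad\text{for every } j\notin\I_P.
\]

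First I would pin down $\phi_P(\overline{\valpha_j})$ explicitly. By Lemma~\ref{L:slope_scalar} the image of $\phi_P$ lies in $\vLam_{Z(M)}^\Q=\{x\in\vLam_G^\Q:\langle\alpha_i,x\rangle=0,\ \forall i\in\I_P\}$, and by construction $\phi_P(\overline{\valpha_j})$ is the unique representative of $\overline{\valpha_j}$ inside this subspace, i.e. $\phi_P(\overline{\valpha_j})=\valpha_j-\sum_{i\in\I_P}c_i\valpha_i$ with the $c_i$ determined by the centrality conditions $\langle\alpha_k,\phi_P(\overline{\valpha_j})\rangle=0$ for $k\in\I_P$. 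These conditions are the linear system $C_M\,c=b$, where $C_M=(\langle\alpha_k,\valpha_i\rangle)_{k,i\in\I_P}$ is the Cartan matrix of the Levi $M$ and $b=(\langle\alpha_k,\valpha_j\rangle)_{k\in\I_P}$.

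The hard part will be the positivity of $c$, and this is the one place where the finite-type root system genuinely enters. The vector $b$ has nonpositive entries, since each $\langle\alpha_k,\valpha_j\rangle$ with $k\in\I_P$, $j\notin\I_P$ is an off-diagonal entry of the Cartan matrix of $G$; and the inverse of a (finite-type) Cartan matrix has nonnegative entries. Hence $c=C_M^{-1}b$ has all entries $\le 0$, and therefore $\phi_P(\overline{\valpha_j})=\valpha_j+\sum_{i\in\I_P}(-c_i)\valpha_i$ is a nonnegative rational combination of simple coroots, which is precisely membership in $\vLam_G^{\pos,\Q}$. The reductive (non-semisimple) case requires no change: the extra central directions lie in the kernel of every $\langle\alpha_k,-\rangle$, so they neither enter the system $C_M\,c=b$ nor affect the partial order, which only sees the coroot directions.
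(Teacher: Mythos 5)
Your proof is correct. Note that the paper itself gives no argument for this lemma --- it is quoted verbatim from \cite[Proposition 3.1]{Schi} --- so there is no in-paper proof to compare against; your reduction (by $\Q$-linearity, to showing $\phi_P(\overline{\valpha_j})\in\vLam_G^{\pos,\Q}$ for $j\notin\I_P$, and then solving the system $C_M c=b$ with $b\le 0$ entrywise and $C_M^{-1}\ge 0$ because $C_M$ is a finite-type Cartan matrix) is exactly the standard argument and is essentially how Schieder proves it. The only ingredients you use without proof --- the nonnegativity of the entries of the inverse of a finite-type Cartan matrix, and the identification $\vLam_{Z(M)}^\Q=\{x\in\vLam_G^\Q:\langle\alpha_i,x\rangle=0,\ i\in\I_P\}$ --- are standard and consistent with the conventions of the paper.
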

 
\begin{lemma}\label{L:slope_proj}(\cite[Lemma 3.1]{Schi})
Let $P$ and $P'$ be two parabolics in $G$ and assume $P\subset P'$. Let $\vlam_{P'}\in \vLam_{G,P'}$ and consider the  projection $\proj_P:\vLam_G^\Q \to \vLam_{G,P}^\Q$. Then we have
\[
\phi_P(\proj_P(\phi_{P'}(\vlam_{P'}))) = \phi_{P'}(\vlam_{P'}).
\]
\end{lemma}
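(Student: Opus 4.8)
The plan is to read off from Definition~\ref{D:slope map} that, writing $M$ and $M'$ for the Levi subgroups of $P$ and $P'$, the slope map $\phi_P$ is the composite of the inverse of the isomorphism $\vLam_{Z(M)}^\Q \xrightarrow{\sim} \vLam_{G,P}^\Q$ (the restriction of $\proj_P$) with the inclusion $\vLam_{Z(M)}^\Q \hookrightarrow \vLam_G^\Q$. First I would record the two resulting identities: $\proj_P \circ \phi_P = \id$ on $\vLam_{G,P}^\Q$, and, more usefully here, the composite $\phi_P \circ \proj_P$ is the idempotent endomorphism of $\vLam_G^\Q$ projecting onto its image $\vLam_{Z(M)}^\Q$ along $\ker \proj_P$. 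In particular $\phi_P \circ \proj_P$ restricts to the identity on the subspace $\vLam_{Z(M)}^\Q$. This already reduces the whole statement to the single assertion that $\phi_{P'}(\vlam_{P'})$ lies in $\vLam_{Z(M)}^\Q$, not merely in $\vLam_{Z(M')}^\Q$.

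The key step is therefore the containment $\vLam_{Z(M')}^\Q \subseteq \vLam_{Z(M)}^\Q$. Since $P \subseteq P'$ we have $\I_M \subseteq \I_{M'}$. By the second identity of Lemma~\ref{L:slope_scalar} applied to $P'$, the cocharacter $\phi_{P'}(\vlam_{P'})$ pairs to zero with every simple root $\alpha_i$, $i \in \I_{M'}$, hence in particular with every $\alpha_i$, $i \in \I_M$. As $\vLam_{Z(M)}^\Q$ is exactly the subspace of $\vLam_G^\Q$ orthogonal to the simple roots $\{\alpha_i : i \in \I_M\}$ (a dimension count against the linear independence of these roots shows this orthogonal complement has the expected dimension, hence equals $\vLam_{Z(M)}^\Q$), we conclude $\phi_{P'}(\vlam_{P'}) \in \vLam_{Z(M)}^\Q$.

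Putting the two steps together, set $\vmu := \phi_{P'}(\vlam_{P'})$. By the previous paragraph $\vmu \in \vLam_{Z(M)}^\Q$, and since $\phi_P \circ \proj_P$ fixes every element of $\vLam_{Z(M)}^\Q$ I obtain
\[
\phi_P(\proj_P(\phi_{P'}(\vlam_{P'}))) = \phi_P(\proj_P(\vmu)) = \vmu = \phi_{P'}(\vlam_{P'}),
\]
which is the desired equality.

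I do not anticipate a genuine obstacle, since once both slope maps are recognized as the canonical sections landing in the central cocharacter subspaces the identity is essentially formal. The one point that deserves care is the nesting $\vLam_{Z(M')}^\Q \subseteq \vLam_{Z(M)}^\Q$, i.e.\ the fact that enlarging the Levi from $M$ to $M'$ shrinks the associated central subspace; the argument above extracts this cleanly from the orthogonality property in Lemma~\ref{L:slope_scalar} together with the evident inclusion $\I_M \subseteq \I_{M'}$, which avoids having to reason directly about the centers of the groups.
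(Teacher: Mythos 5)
Your argument is correct. The paper does not prove this lemma itself---it simply cites \cite[Lemma 3.1]{Schi}---so there is no in-paper proof to compare against; your verification follows the natural route, namely recognizing $\phi_P\circ\proj_P$ as the identity on $\vLam_{Z(M)}^\Q$ and reducing everything to the nesting $\vLam_{Z(M')}^\Q\subseteq\vLam_{Z(M)}^\Q$, which you correctly extract from the orthogonality relation in Lemma~\ref{L:slope_scalar} together with $\I_M\subseteq\I_{M'}$ and a dimension count identifying $\vLam_{Z(M)}^\Q$ with the annihilator of $\{\alpha_i: i\in\I_M\}$.
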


\begin{lemma}\label{L:slope_deep_red_w}(\cite[Proposition 4.6]{Schi})
In the setting of Proposition~\ref{prop_deepred} the following inequality holds
\[
w^{-1}\phi_{Q_1}(\vlam_{Q_1})\ge \phi_{Q_2}(\vlam_{Q_2}).
\]
\end{lemma}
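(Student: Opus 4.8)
The plan is to split the statement into an algebraic part, the behaviour of the slope map under conjugation by $w$, and a geometric part, a single degree inequality between the two induced Levi bundles, in which all the content is concentrated.

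First I would record the equivariance of the slope maps under $w$. Write $\mathrm{Ad}(w)$ for the inner automorphism of $G$ attached to a lift of $w$ to $N_G(T)$. From the definitions of $\I_1',\I_2'$ one checks $w(\I_2')=\I_1'$ at the level of simple roots, so $\mathrm{Ad}(w)$ carries $L_2$ isomorphically onto $L_1$, hence $Z(L_2)$ onto $Z(L_1)$, and acts on $\vLam_G^\Q$ by the Weyl element $w$. Since the slope map $\phi_{Q_i}$ of Definition~\ref{D:slope map} is assembled only from the projection $\vLam_G^\Q\to\vLam_{G,Q_i}^\Q$ and the embedding $\vLam_{Z(L_i)}^\Q\hookrightarrow\vLam_G^\Q$, all of which are natural for $\mathrm{Ad}(w)$, I obtain the identity
\[
 w^{-1}\phi_{Q_1}(\vmu)=\phi_{Q_2}(w_*^{-1}\vmu),\qquad \vmu\in\vLam_{G,Q_1}^\Q,
\]
where $w_*:\vLam_{G,Q_2}^\Q\xrightarrow{\sim}\vLam_{G,Q_1}^\Q$ is induced by $\mathrm{Ad}(w)$. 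Applying this with $\vmu=\vlam_{Q_1}$ rewrites the left-hand side of the desired inequality as $\phi_{Q_2}(w_*^{-1}\vlam_{Q_1})$, so that the statement becomes $\phi_{Q_2}(w_*^{-1}\vlam_{Q_1})\ge\phi_{Q_2}(\vlam_{Q_2})$. Because $\phi_{Q_2}$ preserves the partial order (Lemma~\ref{L:slope_order}), it now suffices to prove the degree inequality
\begin{equation}
 w_*^{-1}\vlam_{Q_1}\ \ge\ \vlam_{Q_2}\quad\text{in }\vLam_{G,Q_2}^\Q. \tag{$\ast$}
\end{equation}
I emphasize that the factor $w^{-1}$ must be kept in place throughout, since $w$ does \emph{not} preserve the order on $\vLam_G^\Q$ and only the two Levi-slope maps are intertwined by it.

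Next I would interpret $(\ast)$ geometrically. The two sides are exactly the degrees $w_*^{-1}\vlam_{Q_1}=\deg\bigl(\lexp{w^{-1}}{\F_{L_1}}\bigr)$ and $\vlam_{Q_2}=\deg(\F_{L_2})$ of the two induced Levi bundles, viewed as $L_2$-bundles via $\mathrm{Ad}(w)$. According to Proposition~\ref{prop_deepred} these are naturally identified by $\mathrm{Ad}(w)$; when the reductions $\F_{P_1},\F_{P_2}$ are only generically in relative position $w$, this identification is only an isomorphism over a dense open $U\subseteq X$, and $(\ast)$ asks precisely that the degree gap of such a generic isomorphism be a nonnegative combination of positive coroots, i.e. that it lie in $\vLam_G^{\pos,\Q}$ after subtraction. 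The model to keep in mind is $G=\GL_2$ with two line subbundles $A,B\subset V$ in generic relative position: the generic isomorphism $A\oplus B\simeq V$ is a genuine, pole-free map of bundles whose determinant is a nonzero section of a line bundle of degree $\deg V-\deg A-\deg B$, forcing $\deg A+\deg B\le\deg V$, which is $(\ast)$ in this case. In general I would produce, for each fundamental weight of $L_2$, an honestly defined morphism of the associated bundles extending the generic isomorphism, and read off the inequality from the effectivity of the divisor where it drops rank; that divisor is supported exactly at the finitely many points where the relative position of $\F_{P_1},\F_{P_2}$ degenerates away from $w$.

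Assembling the pieces, $(\ast)$ together with the order-preservation of $\phi_{Q_2}$ and the equivariance identity yields $w^{-1}\phi_{Q_1}(\vlam_{Q_1})\ge\phi_{Q_2}(\vlam_{Q_2})$, as required. The main obstacle is the geometric step $(\ast)$: the clean principle that a generic isomorphism of the Levi bundles can only increase degree in the positive-coroot directions is the incarnation of the upper semicontinuity of relative position, namely that the Bruhat cell $P_1wP_2/P_2$ is open in its closure, and promoting the rank-one determinant argument above to the coweight inequality $(\ast)$ for a general Levi $L_2$ requires the bookkeeping of Bruhat cells and saturations used in the construction of the deeper reductions. An alternative, possibly more uniform route would bypass $(\ast)$ and test the original inequality directly against dominant weights, converting each pairing into the degree of a line bundle associated with the two reductions via the slope–degree dictionary~\eqref{E:slope induced vbdle}; the same semicontinuity input is what would make each of the resulting scalar degree inequalities hold.
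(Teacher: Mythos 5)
First, a point of reference: the paper itself contains no proof of this lemma --- it is imported wholesale from \cite[Proposition 4.6]{Schi} --- so your attempt can only be measured against Schieder's argument, not against anything in this text. Your algebraic skeleton is correct: the equivariance identity $w^{-1}\phi_{Q_1}(\vmu)=\phi_{Q_2}(w_*^{-1}\vmu)$ does follow from the naturality of Definition~\ref{D:slope map} under $\mathrm{Ad}(w)\colon L_2\xrightarrow{\sim}L_1$, and combined with Lemma~\ref{L:slope_order} it reduces the lemma to your degree inequality $(\ast)$. Nothing is lost in this translation either, since applying the order-preserving projection $\proj_{Q_2}$ to the lemma's inequality recovers $(\ast)$, so the two statements are in fact equivalent.

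The genuine gap is that $(\ast)$ --- which, as you say yourself, is where all the content is concentrated --- is never proved, only planned, and the deferred step is not a routine verification but the entire theorem. Note that ``the two Levi bundles are generically isomorphic'' carries no information by itself: $k(X)$ has cohomological dimension $\le 1$, so by Steinberg's theorem every $L_2$-torsor over $\Spec k(X)$ is trivial, and hence \emph{any} two $L_2$-bundles on $X$ are generically isomorphic, with no constraint relating their degrees. What makes $(\ast)$ true is that this particular generic isomorphism, coming from relative position $w$, extends to globally defined, generically invertible morphisms of associated vector bundles of equal rank (so that the degeneracy locus is an effective divisor); producing those global morphisms is exactly what you postpone (``I would produce, for each fundamental weight, an honestly defined morphism\dots''), and it is precisely the content of \cite[Propositions 4.4 and 4.5]{Schi}: relative position $w$ forces the Pl\"ucker-type factorization and yields injective maps of vector bundles $V^\lambda[\lambda+\Z R]_{\F_{L_2}}\to V^\lambda[w\lambda+\Z R]_{\F_{L_1}}$ which are generically isomorphisms. (This is the same input the present paper quotes, citing \cite[Proposition 4.5]{Schi}, in the unnumbered lemma following Proposition~\ref{proper_map}; also, the correct test weights are the fundamental weights $\omega_i$ of $G$ with $i\notin\I_2'$, viewed in $\Lam_{G,Q_2}^{\Q}$, rather than fundamental weights ``of $L_2$'', which need not be characters of $L_2$.) Your $\GL_2$ determinant computation is this mechanism in rank one, and your overall plan coincides with Schieder's actual proof; but as written, the proposal records the key construction as an intention rather than carrying it out, so it is a correct strategy rather than a proof.
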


\subsection{Drinfel'd's compactification of $\Bun_P$}\label{S:compactif_BunB} Let $P$ be a parabolic subgroup of $G$. In geometric Langlands, to define Eisenstein sheaves, one considers the induction map
\[
 p_P=p:\Bun_P\to\Bun_G
\]
which sends a $P$-bundle to the associated $G$-bundle (see end of Section~\ref{sec_prelim}). 
This map is not, in general, proper and so the objects constructed using it will not commute with Verdier duality. 
In order to fix this problem Braverman and Gaitsgory \cite{BG1} have studied a compactification of this morphism and showed that it posesses all the good properties one would like. 
We recall here their construction and some basic properties. 
For full details see \cite[Section 1.3]{BG1}. 
The author found the notes of T. Haines \cite{Hai} very useful.

Denote by $M$ the Levi of $P$.
The stack $\Bun_P$ classifies the data of
\[
 (\F_G,\F_{M/[M,M]},\kappa_P^\lambda,\lambda\in\Lam_{G,P}\cap\Lam_G^+)
\]
where $\F_G$ is a $G$-bundle, $\F_{M/[M,M]}$ is an $M/[M,M]$\nobreakdash-bundle and $\kappa_P^\lambda$ are maps of vector bundles
\[
 \kappa_P^\lambda:\L^\lambda_{\F_{M/[M,M]}}\hookrightarrow \V^\lambda_{\F_G}
\]
that satisfy the Pl\"ucker relations (see \cite[Section 1.2.1]{BG1}). 
We denoted by $\V^\lam_{\F_G}$ the vector bundle associated to the $G$-bundle $\F_G$ and the representation $V^\lam$ of $G$. Similarly for $\L^\lam_{\F_{M/[M,M]}}$. 

The compactification is obtained by relaxing the condition that $\kappa_P^\lam$ be a map of bundles, i.e. have no zeroes. We will only require it to be an injective map of coherent sheaves, i.e. the cokernel might have torsion.

The stack that we obtain in this way we denote by $\oBun_P$. It is an algebraic stack and it comes equipped with a proper map $\ov{p}_P:\oBun_P\to\Bun_G$ (see loc. cit. Section 1.3.2.).

\begin{rem} In \cite{BG1} the authors considered only the case where $G$ has simply connected derived group $[G,G]$. However, this restriction is only in order to have the property that $\Bun_P$ is dense in $\overline{\Bun}_P$ (see \cite[Proposition 1.2.3]{BG1}) and we do not use the density in this paper. 
We mention that a compactification with all the good properties of $\Bun_P\to \Bun_G$ was constructed in \cite[Section~7]{Schi} for an arbitrary reductive group $G$ but we will not make use of it. 
The only moment where we use Drinfeld's compactifification is to prove Proposition~\ref{proper_map} and for this it is enough to use the above described ``weak'' compactification.
\end{rem}

\subsection{Reductions of semistable $G$-bundles} 
\begin{defin}\label{D:admissible} A $P$-reduction $\F_P$ of a $G$-bundle $\F_G$ is called \emph{admissible} if
\[
\phi_P(\vlam_P) = \phi_G(\vlam_G),
\]
where $\vlam_G,\vlam_P$ are the degrees of $\F_G$ and $\F_P$.
\end{defin}

The main observation that started this work is the following simple lemma:

\begin{lemma}\label{the_parabolic}
 Let $\vlam_G$ be an element of $\vLam_{G,G}$. Then there exists a smallest parabolic $P=P_{\vlam_G}$ with the property that there exists $\vlam_P\in\vLam_{G,P}$ with the following two properties: 
 \begin{align}
  \phi_G(\vlam_G) &= \phi_P(\vlam_P)\label{E:parab-lemma-slope}\\
  \pi(\vlam_P)&=\vlam_G\label{E:parab-lemma-proj}
 \end{align}
where $\pi:\vLam_{G,P}\to\vLam_{G,G}$ is the natural projection. 
This parabolic subgroup is given by the following set of roots
 \[
  \I_{\vlam_G}:=\{i\in \I\mid \<\phi_G(\vlam_G)-\vlam,\omega_i\>\not\in\Z\},
 \]
 where $\vlam\in\vLam_G$ is a lift of $\vlam_G$.
 Moreover, the degree $\vlam_P$ is unique.
\end{lemma}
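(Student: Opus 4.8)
The plan is to reduce everything to a single integrality question about the rational cocharacter $\phi_G(\vlam_G)\in\vLam_G^\Q$. First I would fix a lift $\vlam\in\vLam_G$ of $\vlam_G$; since $\phi_G(\vlam_G)$ and $\vlam$ have the same image in $\vLam_{G,G}^\Q$ (for $\phi_G(\vlam_G)$ because $\proj_G\circ\phi_G=\id$, for $\vlam$ by definition of a lift), their difference lies in the $\Q$-span of the simple coroots, so I can write $\phi_G(\vlam_G)-\vlam=\sum_{i\in\I}c_i\valpha_i$ with $c_i\in\Q$. Pairing against the fundamental weight $\omega_i$ and using $\langle\omega_i,\valpha_j\rangle=\delta_{ij}$ gives $c_i=\langle\phi_G(\vlam_G)-\vlam,\omega_i\rangle$, so that $\I_{\vlam_G}=\{i\in\I:c_i\notin\Z\}$. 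I would first record that this set is independent of the lift: two lifts differ by an integral combination of coroots, and $\langle\omega_i,\valpha_j\rangle=\delta_{ij}\in\Z$ shows the fractional part of $c_i$ is unchanged.

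The core of the argument is the following equivalence, for a standard parabolic $P$ (every parabolic being conjugate to such a one): there is $\vlam_P\in\vLam_{G,P}$ satisfying \eqref{E:parab-lemma-slope} and \eqref{E:parab-lemma-proj} if and only if $\I_{\vlam_G}\subseteq\I_P$. For sufficiency I would set $\vmu:=\vlam+\sum_{i\notin\I_P}c_i\valpha_i$, which lies in $\vLam_G$ exactly because $c_i\in\Z$ for $i\notin\I_P$, and let $\vlam_P$ be its image in $\vLam_{G,P}$. Then \eqref{E:parab-lemma-proj} is immediate since all coroots vanish in $\vLam_{G,G}$. For \eqref{E:parab-lemma-slope} I would note that $\vmu$ and $\phi_G(\vlam_G)$ differ by $\sum_{i\in\I_P}c_i\valpha_i\in\ker\proj_P$, hence have equal image $\proj_P(\phi_G(\vlam_G))$ in $\vLam_{G,P}^\Q$, and then apply Lemma~\ref{L:slope_proj} with $P'=G$, which gives $\phi_P(\proj_P(\phi_G(\vlam_G)))=\phi_G(\vlam_G)$.

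For necessity I would apply $\proj_P$ to the identity $\phi_P(\vlam_P)=\phi_G(\vlam_G)$; since $\proj_P\circ\phi_P=\id$ on $\vLam_{G,P}^\Q$ by the definition of the slope map, this shows $\proj_P(\phi_G(\vlam_G))$ is the image of the integral class $\vlam_P$, hence lifts to some $\vmu\in\vLam_G$ with $\phi_G(\vlam_G)-\vmu\in\mathrm{span}_\Q\{\valpha_i:i\in\I_P\}$. Pairing with $\omega_i$ for $i\notin\I_P$ annihilates the right-hand side, while $\langle\omega_i,\vmu\rangle\in\Z$ by the normalization $\langle\omega_i,\vLam_G\rangle\subseteq\Z$; this forces $c_i\in\Z$, i.e. $\I_{\vlam_G}\subseteq\I_P$. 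Granting the equivalence, the parabolics satisfying the hypothesis are precisely those containing the standard parabolic with $\I_P=\I_{\vlam_G}$, which is therefore the smallest one. For uniqueness I would take two solutions $\vlam_P,\vlam_P'$ and set $\delta:=\vlam_P-\vlam_P'$, so that $\phi_P(\delta)=0$ and $\pi(\delta)=0$; since $\phi_P$ factors through the torsion-free $\vLam_{G,P}^\Q$, the first condition makes $\delta$ torsion. Lifting $\delta$ to $\tilde\delta\in\vLam_G$, the vanishing of $\pi(\delta)$ places $\tilde\delta$ in the integral span of all simple coroots, while torsion gives $m\tilde\delta\in\mathrm{span}_\Z\{\valpha_i:i\in\I_P\}$ for some $m>0$; linear independence of the simple coroots then kills the coefficients for $i\notin\I_P$, so $\tilde\delta\in\mathrm{span}_\Z\{\valpha_i:i\in\I_P\}$ and $\delta=0$.

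The step I expect to be the main obstacle is the bookkeeping around integrality and torsion: because $\phi_G(\vlam_G)$ is only rational, both the description of $\I_{\vlam_G}$ and the uniqueness rely on controlling when rational classes descend to the lattices $\vLam_{G,P}$ and $\vLam_{G,G}$, which need not be torsion-free. The two facts that make this tractable are the normalization $\langle\omega_i,\vLam_G\rangle\subseteq\Z$, which turns integrality into explicit congruences on the $c_i$, and the linear independence of the simple coroots, which disposes of the torsion in the uniqueness argument.
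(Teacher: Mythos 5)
Your proof is correct and follows essentially the same route as the paper's: independence of the choice of lift, necessity of $\I_{\vlam_G}\subseteq\I_P$ by pairing with the fundamental weights $\omega_i$ for $i\notin\I_P$, sufficiency by shifting the lift $\vlam$ by an integral combination of the coroots $\valpha_i$, $i\notin\I_{\vlam_G}$, and uniqueness by intersecting the free abelian $\ker\pi$ with the torsion subgroup $\ker\phi_P$. The only cosmetic difference is that you verify \eqref{E:parab-lemma-slope} by invoking Lemma~\ref{L:slope_proj} with $P'=G$, whereas the paper compares the defining orthogonality relations for $\phi_P(\vlam_P)$ and $\phi_G(\vlam_G)$ coefficient by coefficient.
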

\begin{proof}
Remark that the definition of $I_{\vlam_G}$ above does not depend on the choice of the lift $\vlam$ since for two different such choices $\vlam$ and $\vlam'$ we have $\<\vlam-\vlam',\omega_i\>\in\Z$.

Uniqueness is straightfoward: suppose there are $\vlam_P,\vlam'_P\in\vLam_{G,P}$ that satisfy equations (\ref{E:parab-lemma-slope}),(\ref{E:parab-lemma-proj}). This implies $\vlam_P-\vlam'_P\in \ker(\pi)\cap\ker(\phi_P)$. But the group $\ker(\pi)=\<\valpha_j\mid j\in\I-\I_P\>$ is free abelian and the group $\ker(\phi_P)=\text{torsion}(\vLam_{G,P})$ is torsion, hence their intersection is $0$.

\bigskip

For the existence, we will first prove that for any parabolic subgroup $P$ for which there exists $\vlam_P\in\vLam_{G,P}$ that verifies the two equations (\ref{E:parab-lemma-slope}), (\ref{E:parab-lemma-proj}) above we have $\I_P\supseteq \I_{\vlam_G}$.

Since $\vlam_P\in\vLam_{G,P}$ it follows that $\<\vlam_P,\lam\>$ is well defined for any $\lam\in\Lam_{G,P}^\Q$. 
In particular, $\<\vlam_P,\om_i\>$ is defined for any $i\not\in \I_P$. 

From the definition (see also \cite[Equation 2.1]{Schi}) we have that $\<\phi_P(\vlam_P),\lam\> = \<\vlam_P,\lam\>$ for any $\lam\in\Lam_{G,P}^\Q$. 
In particular, we have 
\begin{align}\label{E:eq form lemma parabolic}\<\phi_P(\vlam_P),\om_i\>=\<\vlam_P,\om_i\> \quad\text{ for all } i\not\in \I_P.
\end{align}

Pick $\vlam$ a lift of $\vlam_P$ to $\vLam_G$. It is therefore also a lift of $\vlam_G$.

The equation (\ref{E:eq form lemma parabolic}) implies that 
\[\<\phi_P(\vlam_P)-\vlam,\om_i\>=0 \quad \text{ for all } i\not\in \I_P.
\]
Since $\phi_P(\vlam_P)=\phi_G(\vlam_G)$ we get that $\I_{\vlam_G}\subseteq \I_P$.

It remains to show that the parabolic $P=P_{I_{\vlam_G}}$ corresponding to $\I_{\vlam_G}$ works.
Pick $\vlam$ a lift of $\vlam_G$ to $\vLam_G$. Up to replacing it by 
\[
\vlam+\sum_{i\not\in \I_{\vlam_G}}\<\phi_G(\vlam_G)-\vlam,\om_i\>\valpha_i 
\]
we can suppose $\<\phi_G(\vlam_G)-\vlam,\omega_i\>=0,\,\forall i\not\in \I_{\vlam_G}$. Let us denote by $\vlam_P$ its projection to $\vLam_{G,P}$. We claim that $\vlam_P$ satisfies the requirements in the statement. Equation (\ref{E:parab-lemma-proj}) is clear by construction.

To check equation (\ref{E:parab-lemma-slope}) we observe that by the definition of the slope map we have 
$\phi_P(\vlam_P)) = \vlam+\sum_{i\in \I_{\vlam_G}} r_i \valpha_i$ where $r_i\in\Q$
 are uniquely determined by  $\<\vlam+\sum_{i\in \I_{\vlam_G}} r_i \valpha_i,\alpha_j\> = 0,\forall j\in \I_{\vlam_G}$.

Similarly, $\phi_G(\vlam_G) = \vlam+\sum_{i\in \I} s_i\valpha_i, s_i\in\Q$ where the $s_i$ are uniquely determined by
$\<\vlam+\sum_{i\in \I} s_i\valpha_i,\alpha_j\> = 0,\forall j\in \I$.

From $\<\phi_G(\vlam_G)-\vlam,\omega_j\> = 0,\forall j\not\in \I_{\vlam_G}$ we get that $s_j=0,\,\forall j\not\in \I_{\vlam_G}$. The uniqueness of the $(r_i)_i$ implies that $r_i=s_i,\forall i\in \I_{\vlam_G}$ and therefore $ \phi_P(\vlam_P)=\phi_G(\vlam_G)$. 
\end{proof}

\begin{rem}\label{R:sstable are stable}
If in the above lemma $P=G$ then no semistable $G$-bundle admits an admissible reduction to a parabolic subgroup and hence any semistable $G$-bundle of degree $\vlam_G$ is automatically stable.
\end{rem}

\textbf{Examples.} Let us see what this lemma gives in concrete exemples in the case of $\GL_n$.
Let $G=\GL_6$ and let us take $\vlam_G = d\vlam_6$ with $d$ an integer between $0$ and $5$. Then $\phi_G(\vlam_G) = \frac d6(\vlam_1+\dots+\vlam_6)$. Recall that $d$ corresponds, classically, to the degree of the vector bundle.
\begin{enumerate} 
 \item If $d = 1,5$ then $\I_{\vlam_G} = \I$ and the parabolic is $P=G$. 
 \item If $d = 2,4$ then $\I_{\vlam_G} = \{1,2,4,5\}$. 
 \item If $d = 3$ then $\I_{\vlam_G} = \{1,3,5\}$.
 \item If $d=0$ then $\I_{\vlam_G} = \emptyset$ and the parabolic is $P=B$.
\end{enumerate}

The following two lemmas do not play an essential role in the proof of the main result but we felt that they answer a natural question  regarding the interplay between semistability for $M,P$ respectively $G$-bundles, so we included them.

\begin{lemma}\label{lem_Msst_Gsst} Let $\vlam_G\in\vLam_{G,G}$ and $\F_P$ be a $P$-bundle of degree $\vlam_P$ as in Lemma~\ref{the_parabolic}. 
Let us denote by $M$ the Levi of $P$. 
If the induced $M$-bundle $\F_P/R_u(P)$ is semistable then the induced $G$\nobreakdash-bundle along the inclusion $P\hookrightarrow G$ is also semistable.
\end{lemma}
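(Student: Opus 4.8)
The plan is to verify the defining inequality of semistability directly. Write $\F_G$ for the induced $G$-bundle (of degree $\vlam_G$, by property~(\ref{E:parab-lemma-proj})), which carries the tautological reduction $\F_P$, and set $c:=\phi_G(\vlam_G)=\phi_P(\vlam_P)$, where the second equality is the admissibility~(\ref{E:parab-lemma-slope}) built into Lemma~\ref{the_parabolic}. The decisive structural feature is that $c\in\vLam_{Z(G)}^\Q$, hence is fixed by the Weyl group $W$. I fix an arbitrary parabolic $Q$ with Levi $N$ and an arbitrary reduction $\F_Q$ of $\F_G$ of degree $\vlam_Q$, and I must show $\phi_Q(\vlam_Q)\le c$. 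To compare the two reductions $\F_P$ and $\F_Q$ I would apply Proposition~\ref{prop_deepred} to the pair $(\F_P,\F_Q)$: if $w$ is their generic relative position, a minimal length representative of $W_M\backslash W/W_N$, I obtain deeper reductions $\F_{Q_1}$ of $\F_P$ and $\F_{Q_2}$ of $\F_Q$ with $Q_1\subseteq P$, $Q_2\subseteq Q$, whose Levi bundles agree under conjugation by $w$.

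On the $P$-side I would exploit the semistability of $\F_M$. Since $Q_1\subseteq P$ and $L_1\subseteq M$, the $G$-slope $\phi_{Q_1}(\vlam_{Q_1})$ coincides with the intrinsic $M$-slope of $\F_{L_1}$ regarded as a reduction of $\F_M$: the isomorphism $\vLam_{G,Q_1}^\Q\simeq\vLam_{Z(L_1)}^\Q$ underlying the slope map depends only on $L_1$ and $T$, not on the ambient group, so the slope maps for $G$ and for $M$ agree on $\vLam_{G,Q_1}$. Semistability of $\F_M$ then gives $\phi_{Q_1}(\vlam_{Q_1})\le\phi_M^M(\vlam_P)$ in the order of $M$, and since $\phi_M^M(\vlam_P)=\phi_P(\vlam_P)=c$ this reads
\[ c-\phi_{Q_1}(\vlam_{Q_1})=\sum_{i\in\I_M} n_i\,\valpha_i,\qquad n_i\ge 0. \]
In particular only coroots indexed by $\I_M$ occur.

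Finally I would transport this to the $Q$-side. By Lemma~\ref{L:slope_deep_red_w}, $w^{-1}\phi_{Q_1}(\vlam_{Q_1})\ge\phi_{Q_2}(\vlam_{Q_2})$; applying the linear, order-preserving map $\phi_Q\circ\proj_Q$ (Lemma~\ref{L:slope_order}) and using that $\phi_{Q_2}(\vlam_{Q_2})$ lifts $\vlam_{Q_2}$, so that $\proj_Q(\phi_{Q_2}(\vlam_{Q_2}))=\vlam_Q$, I obtain $\phi_Q(\vlam_Q)\le\phi_Q(\proj_Q(w^{-1}\phi_{Q_1}(\vlam_{Q_1})))$. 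Expanding the right-hand side with $w^{-1}c=c$, the identity $\phi_Q(\proj_Q(c))=c$ (Lemma~\ref{L:slope_proj} for $Q\subseteq G$), and linearity yields
\[ \phi_Q(\proj_Q(w^{-1}\phi_{Q_1}(\vlam_{Q_1})))=c-\sum_{i\in\I_M}n_i\,\phi_Q(\proj_Q(w^{-1}\valpha_i)). \]
The one step that genuinely requires care is the positivity of the subtracted term, and this is exactly where the choice of $w$ pays off: being the minimal length representative, $w$ is $(\I_M,\I_N)$-reduced, so $w^{-1}(\alpha_i)\in\Phi^+$ for every $i\in\I_M$ and hence each $w^{-1}(\valpha_i)$ is a positive coroot. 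Since $n_i\ge 0$ the subtracted vector lies in $\vLam_G^{\pos,\Q}$, and both $\proj_Q$ and $\phi_Q$ preserve positivity; therefore $\phi_Q(\proj_Q(w^{-1}\phi_{Q_1}(\vlam_{Q_1})))\le c$, whence $\phi_Q(\vlam_Q)\le c=\phi_G(\vlam_G)$ for every reduction, i.e. $\F_G$ is semistable. The main obstacle is thus not the chain of slope inequalities but matching the potentially destabilizing coroots of $\F_M$ (those indexed by $\I_M$) against the positivity of $w^{-1}$, which is precisely the property of minimal length relative positions.
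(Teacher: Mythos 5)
Your proof is correct and follows essentially the same route as the paper's: form the deeper reductions for the pair $(\F_P,\F_Q)$, use semistability of the induced $M$-bundle to control the slope on the $P$-side, transfer to the $Q$-side via Lemma~\ref{L:slope_deep_red_w}, and conclude by applying $\proj_Q$ and $\phi_Q$ with Lemmas~\ref{L:slope_order} and~\ref{L:slope_proj}. The only difference is that where the paper cites \cite[Lemma 4.7]{Schi} for the inequality $w^{-1}\phi_P(\vlam_P)\ge w^{-1}\phi_{Q_1}(\vlam_{Q_1})$, you re-derive it directly by writing $\phi_P(\vlam_P)-\phi_{Q_1}(\vlam_{Q_1})$ as a nonnegative combination of $\valpha_i$, $i\in\I_M$, and using that the minimal-length representative $w$ satisfies $w^{-1}(\alpha_i)\in\Phi^+$ for $i\in\I_M$ --- a valid, self-contained substitute.
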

\begin{proof} Let us denote by $\F_G$ the $G$-bundle from the statement. 
Its degree is precisely $\vlam_G$.
Let $Q$ be a parabolic subgroup of $G$ and suppose there exists a reduction $\F_P$ of $\F_G$ to $Q$ which is of degree $\vlam_Q$.
We want to prove that $\phi_Q(\vlam_Q)\le \phi_G(\vlam_G)$.

Suppose the bundles $\F_P$ and $\F_Q$ are, generically, in relative position $w\in W$. We will use once again the construction of deeper reductions of \cite[Section 4.2]{Schi}, see Proposition~\ref{prop_deepred}. Let $P_1\subseteq P$ and $Q_1\subseteq Q$ be the parabolics given by this construction.
Since $\F_P$ is semistable when induced to an $M$-bundle we get from \cite[Lemma 4.7]{Schi} the following inequality\footnote{Some confusions might arrise here because the notations are not identical with those of the cited reference.}
\[
 w^{-1}\phi_P(\vlam_P)\ge w^{-1}\phi_{P_1}(\vlam_{P_1}).
\]
Moreover, Lemma~\ref{L:slope_deep_red_w} gives us
\[
 w^{-1}\phi_{P_1}(\vlam_{P_1})\ge \phi_{Q_1}(\vlam_{Q_1}).
\]

Combining these two inequalities with $\phi_P(\vlam_P) = \phi_G(\vlam_G)$ and the fact that the latter is invariant under $W$ we obtain 
\begin{equation}\label{eq_GQ1}
\phi_G(\vlam_G)\ge\phi_{Q_1}(\vlam_{Q_1}).  
\end{equation}

Let us denote by $\proj_Q:\vLam^\Q_G\to \vLam_{G,Q}^\Q$ the natural projection. It's clear that it preserves the partial orders.
We obviously have $\vlam_Q = \proj_Q(\vlam_{Q_1})$. 
Moreover, from Lemma~\ref{L:slope_proj} we have 
\begin{equation}\label{eq_QprojG}
\phi_Q(\proj_Q(\phi_G(\vlam_G))) = \phi_G(\vlam_G) 
\end{equation}

 and from the definition of the slope map we also have 
\begin{equation}\label{eq_QprojQ1}
\phi_Q(\proj_Q(\phi_{Q_1}(\vlam_{Q_1}))) = \phi_Q(\vlam_Q).
\end{equation}

Lemma~\ref{L:slope_order} says that $\phi_Q$ preserves the partial orders. 
Applying $\proj_Q$ and then $\phi_Q$ to the inequality (\ref{eq_GQ1}) and using the equalities~(\ref{eq_QprojG}),(\ref{eq_QprojQ1}) we obtain
\[
 \phi_G(\vlam_G)\ge \phi_Q(\vlam_Q).
\]
which is what we wanted to prove.
\end{proof}

\begin{lemma}\label{lem_Gsst_Msst} If $\F_P$ is a $P$-bundle of degree $\vlam_P$ as in Lemma~\ref{the_parabolic} such that the induced $G$-bundle is semistable then the induced $M$-bundle is also semistable.
\end{lemma}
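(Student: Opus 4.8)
The plan is to verify $M$-semistability of the induced bundle $\F_M:=\F_P/R_u(P)$ straight from the definition, by reflecting every reduction of $\F_M$ taken inside $M$ to a reduction of $\F_G$ taken inside $G$, and then invoking the semistability of $\F_G$; this is the natural converse to Lemma~\ref{lem_Msst_Gsst}. So I would fix an arbitrary parabolic $Q_M\subseteq M$ and a reduction $\F_{Q_M}$ of $\F_M$ to $Q_M$, of degree $\vlam_{Q_M}\in\vLam_{M,Q_M}$; the target inequality is $\phi^M_{Q_M}(\vlam_{Q_M})\le\phi^M_M(\vlam_P)$, where $\phi^M$ and $\le$ are the slope map and partial order computed inside $M$ (and $\vlam_P$ is the degree of $\F_M$ under $\vLam_{G,P}=\vLam_{M,M}$). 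Let $Q\subseteq P\subseteq G$ be the preimage of $Q_M$ under $P\to M$, so that $Q=P\times_M Q_M$ is a parabolic of $G$ whose Levi $L$ coincides with the Levi of $Q_M$. Pulling $\F_{Q_M}$ back along the $R_u(P)$-bundle $\F_P\to\F_M$ produces a reduction $\F_Q$ of $\F_P$, hence of $\F_G$, to $Q$, with $\F_Q\times^Q Q_M\cong\F_{Q_M}$.

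The second step is a compatibility between the slope maps of $G$ and of $M$, which I expect to be essentially formal. Because the Levi of $Q$ and of $Q_M$ is the same group $L$, the relevant degree spaces coincide, $\vLam_{G,Q}=\vLam_{M,Q_M}$, the induced degrees agree, $\vlam_Q=\vlam_{Q_M}$ (both are read off the common induced $L$-bundle), and by Definition~\ref{D:slope map} both $\phi_Q$ and $\phi^M_{Q_M}$ are the one inverse isomorphism $\vLam_{G,Q}^\Q=\vLam_{M,Q_M}^\Q\simeq\vLam_{Z(L)}^\Q$, which only depends on $L$. Hence $\phi_Q(\vlam_Q)=\phi^M_{Q_M}(\vlam_{Q_M})$ inside $\vLam_G^\Q=\vLam_M^\Q$. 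The same reasoning for the pair $(P,M)$, combined with the admissibility equation~(\ref{E:parab-lemma-slope}), gives $\phi_G(\vlam_G)=\phi_P(\vlam_P)=\phi^M_M(\vlam_P)$.

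Now semistability of $\F_G$ yields $\phi_Q(\vlam_Q)\le\phi_G(\vlam_G)$, that is, $\nu:=\phi^M_M(\vlam_P)-\phi^M_{Q_M}(\vlam_{Q_M})$ lies in $\vLam_G^{\pos,\Q}=\Q_+\{\valpha_i\mid i\in\I\}$. This is where the main obstacle sits: I obtain positivity in the order of $G$, but I want positivity in the order of $M$, which is strictly stronger, so the implication is not automatic. To close the gap I would observe that $\nu$ also lies in $\mathrm{span}_\Q\{\valpha_i\mid i\in\I_M\}$: both $\phi^M_M(\vlam_P)$ and $\phi^M_{Q_M}(\vlam_{Q_M})$ project to $\vlam_P$ in $\vLam_{M,M}^\Q=\vLam_G^\Q/\mathrm{span}_\Q\{\valpha_i\mid i\in\I_M\}$, so their difference projects to $0$. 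Expanding $\nu$ in the simple coroots and using that $\{\valpha_i\mid i\in\I\}$ are linearly independent forces the coefficients indexed by $\I\setminus\I_M$ to vanish while those indexed by $\I_M$ remain nonnegative; hence $\nu\in\Q_+\{\valpha_i\mid i\in\I_M\}=\vLam_M^{\pos,\Q}$. This is exactly $\phi^M_{Q_M}(\vlam_{Q_M})\le\phi^M_M(\vlam_P)$ in the order of $M$. As $Q_M$ and $\F_{Q_M}$ were arbitrary, $\F_M$ is semistable.
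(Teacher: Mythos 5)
Your proof is correct and follows the same route as the paper, which simply declares the statement obvious via the natural bijection between parabolics of $M$ and parabolics of $G$ contained in $P$. You have merely written out the details that the paper suppresses, including the one genuinely non-obvious point --- that the inequality in the dominance order of $G$ descends to the (a priori stronger) dominance order of $M$ because the difference of slopes lies in $\mathrm{span}_\Q\{\valpha_i \mid i\in\I_M\}$ and the simple coroots are linearly independent --- and that step is handled correctly.
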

\begin{proof}
 This is obvious since the parabolics of $M$ are in natural bijection with the parabolics of $G$ contained in $P$.
\end{proof}

\begin{rem}\label{R:diag_cartes}
Lemmas~\ref{lem_Msst_Gsst}, \ref{lem_Gsst_Msst} show that the following diagram makes sense and both squares are cartesian:
 
 {\centering$\xymatrix{
 & \Bun_P^{\vlam_P,\ss}\ar@{^(->}[d] \ar[rd]^{p_P}\ar[ld]_{q_P} &\\
 \Bun_M^{\vlam_P,\ss} \ar@{^(->}[d] & \Bun_P^{\vlam_P}\ar[rd]_{p_P}\ar[ld]^{q_P} & \Bun_G^{\vlam_G,\ss}\ar@{^(->}[d]\\
 \Bun_M^{\vlam_P} & & \Bun_G^{\vlam_G}
 }$
 
 }
We also obtain that the following induction map is well defined
 \begin{equation}\label{E:M_to_G_sst}
  \Bun_M^{\vlam_P,\ss}\longrightarrow\Bun_G^{\vlam_G,\ss}. 
 \end{equation}

\end{rem}

\begin{defin}
Let $P$ be a parabolic subgroup of $G$ and denote by $M$ its Levi subgroup. We define the regular $M$-bundles to be those $M$-bundles $\F_M$ for which the cohomology groups $H^\bullet(X,{(\g\slash \p)}_{\F_M})$ vanish.
\end{defin}

We will denote by $\Bun_M^{\vlam_P,\reg}$ the substack of regular bundles of degree $\vlam_P$.  Similarly, one defines regular $P$-bundles by the condition $H^\bullet(X,(\g/\p)_{\F_P}) = 0$. We will denote the corresponding substack by $\Bun_P^{\vlam_P,\reg}$.
Using the projection $P\to M$ one sees that $\Bun_P^{\vlam_P,\reg}$ is the preimage of $\Bun_M^{\vlam,\reg}$ under the natural map.

\begin{rem} It follows from general considerations of semicontinuity that the regular locus is open. We will see in the course of the proof of our main theorem that it is also non empty (for the particular degree $\vlam_P$ given by Lemma~\ref{the_parabolic}).
\end{rem}


\subsection{The Weyl group action is generically free}\label{S:Weyl gen free} In this subsection $X$ is an arbitrary smooth projective curve of nonzero genus.

\begin{lemma}\label{L:act-free} Let $P$ be a parabolic subgroup of $G$ and denote by $M$ its Levi subgroup. We denote by $W_{M,G}$ the relative Weyl group of $M$ in $G$. Let $\vlam\in\vLam_{G,P}$ be a degree such that $W_{M,G}(\vlam)=\vlam$. Then the group $W_{M,G}$ acts on $\Bun_M^{\vlam}$ generically free on objects.
\end{lemma}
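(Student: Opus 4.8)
The plan is to produce a dense open substack of $\Bun_M^{\vlam}$ on which every object has trivial stabilizer in $W_{M,G}$. Since $W_{M,G}$ is finite, it suffices to show that for each nontrivial $w\in W_{M,G}$ the locus $\{\F_M : {}^w\F_M\simeq\F_M\}$ lies in a proper closed substack; the complement of the (finite) union of these loci is then the sought open substack. Here ${}^w\F_M$ is the pushforward of $\F_M$ along $c_{n_w}\colon M\to M$, conjugation by a lift $n_w\in N_G(M)$ of $w$. This is well defined up to isomorphism, and since $c_{n_w}$ induces $w$ on $\vLam_{M,M}$ and $w\vlam=\vlam$, it preserves the component $\Bun_M^{\vlam}$, so the action is defined.

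First I would reduce to a torus. Set $S:=M/[M,M]$ and let $\bar\vlam\in\vLam_S$ be the image of $\vlam$. As $c_{n_w}$ preserves $[M,M]$ it descends to an automorphism of $S$ depending only on $w$ (inner automorphisms act trivially on $S$), so the induction map $q\colon\Bun_M^{\vlam}\to\Bun_S^{\bar\vlam}$ is $W_{M,G}$-equivariant. Because $M\to S$ is surjective with connected kernel $[M,M]$, the map $q$ is surjective; hence the $q$-preimage of any proper closed substack is again proper closed, and (by equivariance) it contains the fixed locus upstairs. Thus it is enough to show that for $w\neq 1$ the fixed locus of $w$ on $\Bun_S^{\bar\vlam}$ is proper closed.

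The point that makes this work is that $w$ acts nontrivially on $\vLam_S^{\Q}$. Indeed, a lift $n_w$ acts trivially on $Z(M)^\circ$ only if $n_w\in C_G(Z(M)^\circ)=M$, i.e. only if $w=1$; so for $w\neq 1$ the induced automorphism of $Z(M)^\circ$, equivalently of $\vLam_{Z(M)}^{\Q}$, is nontrivial. Via the isogeny $Z(M)^\circ\to S$ one has $\vLam_S^{\Q}=\vLam_{Z(M)}^{\Q}$ as $W_{M,G}$-modules, whence $w$ acts nontrivially on $\vLam_S^{\Q}$.

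Finally I would analyze the torus side, where the genus hypothesis enters. The component $\Bun_S^{\bar\vlam}$ is a torsor under the abelian variety $A:=\Bun_S^{0}\simeq\Pic^0(X)\otimes_{\Z}\vLam_S$, on which $w$ acts through $\mathrm{id}\otimes w$. Choosing a base point $x_0\in\Bun_S^{\bar\vlam}$ and setting $c:=w\cdot x_0-x_0\in A$, a point $x_0+a$ is fixed iff $(1-w)a=c$, so the fixed locus is empty or a coset of $\ker\!\big(1-w\colon A\to A\big)$. Since $w$ is nontrivial on $\vLam_S^{\Q}$ and $\Pic^0(X)$ is positive-dimensional (this is where $g\geq 1$ is used), the endomorphism $1-w$ of $A$ has positive-dimensional image, so its kernel has positive codimension; the fixed locus is therefore proper closed, as desired. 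I expect this last step to be the main obstacle: controlling the fixed points of $w$ on the abelian variety $\Bun_S^{0}$ together with the affine, torsor nature of $\Bun_S^{\bar\vlam}$. It is exactly here that nonzero genus is indispensable, for when $g=0$ the variety $A$ degenerates to a point and the action becomes trivial.
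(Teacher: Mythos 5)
Your argument is correct and follows essentially the same route as the paper: reduce along the $W_{M,G}$-equivariant surjection to the torus $S=M/[M,M]$, establish faithfulness of $W_{M,G}$ on $S$ via $C_G(Z(M)^\circ)=M$ (the paper does this with a regular one-parameter subgroup of $Z(M)^\circ$), and then exploit that $\Pic^0(X)$ is positive-dimensional for $g\ge 1$. The only cosmetic difference is the final step, where you identify the fixed locus of $w\neq 1$ with a coset of $\ker(1-w)$ on $\Pic^0(X)\otimes_{\Z}\vLam_S$, whereas the paper exhibits an explicit $S$-bundle $\vmu(\cL)$ not fixed by $w$ and concludes by separatedness; both give that the fixed locus is a proper closed substack.
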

\begin{proof}
Observe that it is enough to prove that $W_{M,G}$ acts on the stack $\Bun_S^{\vlam}$ generically free on objects, where $S:=M/[M,M]$. Indeed, the projection morphism $M\to S$ is $W_{M,G}$-equivariant and hence the induced determinant map $\det:\Bun_M^\vlam\to \Bun_S^\vlam$ is also $W_{M,G}$-equivariant.

Lemma~\ref{L:rel weyl Levi act free} implies that $W_{M,G}$ acts faithfully on $S$. Then we conclude using Lemma~\ref{L:fgrp torus act free}.
\end{proof}

\begin{lemma}\label{L:rel weyl Levi act free} Let $P$ be a parabolic subgroup of $G$ and denote by $M$ its Levi subgroup. We denote by $W_{M,G}=N_G(M)/M$ the relative Weyl group of $M$ in $G$. Then $W_{M,G}$ acts faithfully on the torus $M/[M,M]$.
\end{lemma}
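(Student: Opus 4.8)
The plan is to reduce the assertion to the standard fact that a Levi subgroup equals the centralizer of its connected centre. Write $S := M/[M,M]$ and $Z := Z(M)^\circ$ for the connected centre of $M$. First I would check that the conjugation action of $N_G(M)$ on $M$ preserves both $[M,M]$ and $Z$, and hence descends to an action on the torus $S$; moreover $M$ itself acts trivially on $S$, since for $m\in M$ one has $mxm^{-1}x^{-1}\in[M,M]$, so conjugation is the identity on $S$. Thus the action of $W_{M,G}=N_G(M)/M$ on $S$ is well defined and faithfulness is exactly the content of the lemma.

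Next I would transport the question from $S$ to $Z$. For connected reductive $M$ the multiplication map $Z\times[M,M]\to M$ is surjective with finite kernel, so the composite $Z\hookrightarrow M\tto S$ is an isogeny of tori and induces a $W_{M,G}$-equivariant isomorphism on rational character lattices $\Lam_S^\Q\simeq\Lam_Z^\Q$. Since an automorphism of a torus is trivial if and only if it acts trivially on the (rational) character lattice, an element of $W_{M,G}$ acts trivially on $S$ if and only if it acts trivially on $Z$. Hence it suffices to prove that $W_{M,G}$ acts faithfully on $Z$.

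Finally, let $n\in N_G(M)$ be an element centralizing $Z$; I must show $n\in M$. Here I would invoke the identity $M=C_G(Z)$. The centralizer $C_G(Z)$ is a connected reductive subgroup containing $T$ whose roots are precisely those $\alpha\in\Phi$ trivial on $Z$; and $\alpha$ is trivial on $Z$ exactly when $\alpha$ is orthogonal to $\vLam_Z^\Q=\{x\in\vLam_G^\Q:\<\alpha_i,x\>=0\ \forall i\in\I_M\}$, i.e.\ when $\alpha\in\span_\Q\{\alpha_i:i\in\I_M\}$. Intersecting with $\Phi$ yields exactly the roots of $M$, so $C_G(Z)$ and $M$ share the maximal torus $T$ and the same root system and therefore coincide. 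Consequently $n\in C_G(Z)=M$, so its class in $W_{M,G}$ is trivial, which proves faithfulness. The one point requiring care is this identification $M=C_G(Z)$ — concretely, the perfectness of the pairing $\Lam_G^\Q\times\vLam_G^\Q\to\Q$ that lets one pass from ``trivial on $Z$'' to ``lying in the $\Q$-span of $\I_M$'' — but this is a standard property of Levi subgroups of parabolics rather than a genuine obstacle.
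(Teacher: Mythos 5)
Your proof is correct and follows essentially the same route as the paper's: both arguments come down to the fact that an element of $N_G(M)$ acting trivially on $M/[M,M]$ must centralize the connected centre $Z(M)^\circ$ (you pass through the isogeny $Z(M)^\circ\to M/[M,M]$ and rational character lattices, the paper through the finiteness of $Z(M)^\circ\cap[M,M]$ and connectedness of $\bG_m$), and then to the identification of $M$ as the centralizer of its connected centre. The paper phrases this last step via a regular one-parameter subgroup $\vmu$ with $C_G(\vmu(\bG_m))=M$ rather than the full identity $M=C_G(Z(M)^\circ)$, but this is the same classical fact.
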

\begin{proof}
Let $\vmu:\bG_m\to Z(M)^\circ$ be a regular one parameter subgroup. Regular here means that $C_G(\vmu(\bG_m)) = M$. The existence of such subgroups is classical. 
Let $w\in N_G(M)$. 
It is enough to prove that if the commutator $[w,\vmu(t)]\in [M,M]$ for all $t\in \bG_m$ then $w\in M$.

Clearly, for all $t\in\bG_m$ we have $[w,\vmu(t)]\in Z(M)^\circ$ and if it belongs also to $[M,M]$ then $[w,\vmu(t)] = 1$ for all $t\in\bG_m$ because $Z(M)^\circ\cap [M,M]$ is finite and $Z(M)^\circ$ is connected. In other words $w\in C_G(\vmu(\bG_m))=M$.
\end{proof}

\begin{lemma}\label{L:fgrp torus act free} Let $S$ be a torus and $\Gamma$ a finite group acting faithfully\footnote{No $\gamma\in\Gamma\setminus\{1\}$ acts as identity on $S$.} by algebraic group homomorphisms  on $S$. Let $\vlam$ be a cocharacter of $S$ that is invariant under $\Gamma$. Then $\Gamma$ acts on $\Bun_S^\vlam$ generically free on objects.
\end{lemma}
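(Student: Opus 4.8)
The plan is to reduce the statement to a concrete assertion about the action of $\Gamma$ on an abelian variety. Since $S$ is a torus it is canonically $\vLam_S\otimes_\Z\bG_m$, where $\vLam_S=\Hom(\bG_m,S)$ is its free cocharacter lattice, and consequently $\Bun_S=\vLam_S\otimes_\Z\Pic_X$ as commutative group stacks, where $\Pic_X=\Bun_{\bG_m}$ is the Picard stack. Because $X$ is projective and connected every $S$-bundle has automorphism group $S$, so each component $\Bun_S^\vlam$ is a gerbe banded by $S$ over its coarse space, which is a torsor $A$ under the abelian variety $A^0:=\vLam_S\otimes_\Z J$ with $J=\Pic^0(X)$. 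As $\vlam$ is $\Gamma$-invariant, $\Gamma$ preserves the component $\Bun_S^\vlam$ and its action descends to $A$. The point to retain is that two $S$-bundles are isomorphic exactly when they have the same image in the coarse space, so for an object $\F$ the set $\{\gamma\in\Gamma:\gamma\cdot\F\cong\F\}$ coincides with the $\Gamma$-stabilizer of the coarse point $[\F]\in A$. Hence it suffices to prove that $\Gamma$ acts generically freely on $A$.

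Next I would pin down the $\Gamma$-action on $A$. Fixing $x_0\in X$, the $S$-bundle $\F_0$ induced from the degree-one line bundle $\O(x_0)$ along $\vlam\colon\bG_m\to S$ has degree $\vlam$ and is $\Gamma$-fixed, since $\gamma\cdot\F_0$ is induced along $\gamma\circ\vlam=\gamma\cdot\vlam=\vlam$. Using $\F_0$ as origin identifies $A$ with the abelian variety $A^0=\vLam_S\otimes_\Z J$, on which $\gamma\in\Gamma$ acts $\Z$-linearly through the automorphism it induces on $\vLam_S$, tensored with the identity of $J$. Generic freeness then amounts to showing that for every $\gamma\neq 1$ the fixed locus, which under this identification is $\ker(\gamma-1\colon A^0\to A^0)$, is a proper closed subvariety; granting this, the complement of the finite union $\bigcup_{\gamma\neq 1}A^\gamma$ is a dense open substack on which $\Gamma$ acts freely.

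The heart of the matter is that $\gamma-1$ is a nonzero endomorphism of $A^0$ whenever $\gamma\neq1$. Choosing a $\Z$-basis of $\vLam_S$ identifies $A^0$ with $J^{r}$ ($r=\dim S$) and $\gamma$ with the action of its integer matrix $M_\gamma=(m_{ij})\in\GL_r(\Z)$; this acts as the identity of $J^r$ iff each $m_{ij}-\delta_{ij}$ induces the zero map $J\to J$. Here the hypothesis $g\geq 1$ enters: for $n\in\Z$ the multiplication $[n]\colon J\to J$ vanishes iff $n=0$, because $J$ is a nonzero abelian variety, so acting as the identity forces $M_\gamma$ to be the identity matrix, whence $\gamma=1$ by faithfulness of $\Gamma$ on $\vLam_S$. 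Therefore $\gamma-1\neq0$ for $\gamma\neq1$, and being a nonzero homomorphism of abelian varieties its kernel has dimension $\dim A^0-\dim\,\mathrm{im}(\gamma-1)<\dim A^0$, so $A^\gamma$ is proper as required. I expect the main obstacle to be the bookkeeping of the first paragraph, namely identifying $\Bun_S^\vlam$ with a gerbe over the abelian variety $A$ and verifying that generic freeness on objects is detected on the coarse space; once that is in place, the arithmetic of integer matrices acting on $J^r$ is elementary and uses positivity of the genus in an essential way.
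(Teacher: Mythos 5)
Your proposal is correct and takes essentially the same route as the paper: both reduce the question to the $\Gamma$-action on the coarse space $\vLam_S\otimes_\Z\Pic^0(X)$, use the $\Gamma$-fixed bundle induced from $\O(x_0)$ along $\vlam$ to handle the component of degree $\vlam$, and use that $\Pic^0(X)$ is a nonzero abelian variety (genus $\ge 1$) to see that a nontrivial element acts with proper fixed locus. The only organizational difference is that the paper first exhibits an explicit non-fixed object $\vmu(\cL)$ with $\cL^r\neq\cO$ and then invokes closedness of the fixed locus in the separated coarse space, whereas you identify the fixed locus directly as $\ker(M_\gamma-1)$ on $\Pic^0(X)^r$ and bound its dimension.
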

\begin{proof}
Let $\gamma\in \Gamma-\{1\}$ and let $\vmu$ be a cocharacter of $S$ such that $\gamma(\vmu)\neq \vmu$. This exists since $S$ is generated by its cocharacters and $\gamma$ is not the identity on $S$. Let $\alpha:S\to\bG_m$ be a character of $S$ such that $\<\vmu,\alpha\>-\<\gamma(\vmu),\alpha\> = r\neq 0$. This is also possible since the pairing between characters and cocharacters of a torus is perfect. 
Finally, since $X$ is not the projective line we can pick a line bundle $\cL\in\Pic^0(X)$ such that $\cL^r\neq \cO$. 

Define $\cF_S:=\vmu(\cL)\in\Bun_S^0$ the induced $S$-bundle using the cocharacter $\vmu:\bG_m\to S$. We claim that $\gamma(\cF_S)\not\simeq \cF_S$. Indeed, by applying $\alpha$ we get 
\[\alpha(\cF_S) \otimes\alpha(\gamma(\cF_S))^{-1} = \cL^{\<\alpha,\vmu\>-\<\alpha,\gamma(\vmu)\>} = \cL^r\not\simeq \cO.\]

The above proves that $\Gamma$ acts on $\Bun_S^0$ faithfully on objects. In fact, the action is generically free on objects. To see this, write $\Bun_S^0$ in the form $A\times BS$ where $A$ is a smooth algebraic variety (a product of the Jacobian $Pic^0(X)$ of $X$) and $BS$ is the classifying stack of $S$. The action of $W_{M,G}$ restricts to an action on $A$ and, since $A$ is separated, a non-trivial automorphism of $A$ acts generically freely. One can also remark that if we run the above construction for a generic $\vmu$ and a generic $\cL\in\Pic^0(X)$ one obtains an open dense substack of $\Bun_S^0$ on which $\gamma$ acts freely on objects. 

\vspace{0.2cm}
Let us prove that the same is true for the connected component $\Bun_S^\vlam$.

The group $S$ being commutative, the multiplication map $S\times S\to S$ is a group homomorphism and hence induces a tensor product $\Bun_S\times\Bun_S\to \Bun_S$ that we'll denote $(\cF_S, \cF'_S) \mapsto \cF_S\otimes \cF'_S$.

Since $\Gamma$ acts by group homomorphisms on $S$ we get that the above tensor product $\otimes:\Bun_S\times\Bun_S\to \Bun_S$ is $\Gamma$-equivariant.

Fix a point $x_0\in X$. Using the cocharacter $\vlam:\bG_m\to S$ we can induce the $\bG_m$-bundle $\cO(x_0)$ to an $S$-bundle that we denote $\cF_S(\vlam \cdot x_0)$. 
By the assumption on $\vlam$ we have $\gamma(\cF_S(\vlam\cdot x_0))\simeq\cF_S(\gamma(\vlam)\cdot x_0) = \cF_S(\vlam\cdot x_0)$ for all $\gamma\in \Gamma$. 

Since tensoring by $\cO(x_0)$ induces an isomorphism $\Pic^0(X)\to\Pic^1(X)$ we get that tensoring by $\cF_S(\vlam\cdot x_0)$ induces an isomorphism $\Bun_S^0\simeq \Bun_S^{\vlam}$. Moreover, from the previous paragraph, the isomorphism is also $\Gamma$-equivariant. We conclude that $\Gamma$ acts on $\Bun_S^\vlam$ generically free on objects.
\end{proof}

\section{Main theorem and proofs}

Let us first recall the definition of a small map.
\begin{defin} A map between algebraic varieties (or stacks) $f:Y\to Z$ is said to be \emph{semismall} if it is proper and
\[                                                                                              \dim(Y\times_Z Y) = \dim (Z).
\]
Moreover, if all irreducible components of $Y\times_Z Y$ which are of maximal dimension dominate $Z$ then we say that $f$ is {\emph{small}}.
\end{defin}

\begin{thm}\label{thm_main} Let $X$ be an elliptic curve. Let $\vlam_G$ be an element of $\vLam_{G,G}$ and let $P$ and $\vlam_P$ be those from Lemma~\ref{the_parabolic}.
Then the following holds:
 \[
  p:\Bun_P^{\vlam_P,\ss}\to\Bun_G^{\vlam_G,\ss}
 \]
 is a small map which is generically (over the regular locus) a Galois covering with Galois group the relative Weyl group $W_{M,G} = N_G(M)/M$.
\end{thm}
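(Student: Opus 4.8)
The plan is to establish the three assertions — properness, the generic Galois structure over the regular locus, and smallness — more or less independently, organizing the last (and hardest) part around the relative-position stratification of the fiber product.

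First I would prove properness of $p$. Since $p$ is the restriction to the semistable locus of the induction map, which is not proper, I would factor it through Drinfel'd's compactification and use that $\ov{p}_P:\oBun_P\to\Bun_G$ is proper (as recalled in Section~\ref{S:compactif_BunB}). The key point is that over the semistable locus the genuine locus $\Bun_P^{\vlam_P,\ss}$ is closed in, and equals the preimage of $\Bun_G^{\vlam_G,\ss}$ in, the relevant degree component of $\oBun_P$. Indeed, a boundary point encodes a $P$-reduction with a nonzero torsion defect; such data produces an honest reduction of the same $G$-bundle of a degree $\vlam'$ with $\pi(\vlam')=\vlam_G$ but whose slope exceeds $\phi_G(\vlam_G)$ strictly in the cone of positive coweights, contradicting semistability of the induced $G$-bundle. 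Hence no boundary stratum meets the semistable locus and $p$ is proper.

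Next I would analyze the $W_{M,G}$-action and the generic behavior. First note that the degree $\vlam_P$ of Lemma~\ref{the_parabolic} is $W_{M,G}$-invariant: the defining conditions $\phi_P(\vlam_P)=\phi_G(\vlam_G)$ and $\pi(\vlam_P)=\vlam_G$ are preserved by $W_{M,G}$ (as $\phi_G(\vlam_G)$ is $W$-invariant) and $\vlam_P$ is the unique solution. Conjugation by $N_G(M)$ therefore yields an action of $W_{M,G}$ on $\Bun_M^{\vlam_P}$ preserving semistability; through the cartesian diagram of Remark~\ref{R:diag_cartes} this lifts to an action with $p\circ\gamma=p$ for all $\gamma\in W_{M,G}$, which is generically free on objects by Lemma~\ref{L:act-free}. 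I would then show that the regular locus is nonempty and that over it $p$ is étale: the relative tangent complex of the induction map at $\F_P$ is $R\Gamma(X,(\g/\p)_{\F_P})[1]$, so the regularity condition $H^\bullet(X,(\g/\p)_{\F_M})=0$ is exactly its vanishing, i.e. $p$ is unramified and, being proper of the expected relative dimension zero, étale at such points. Since $p$ is proper, étale, and generically a free $W_{M,G}$-quotient of matching degree, it is a Galois covering with group $W_{M,G}$ over the regular locus.

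For smallness I would stratify the fiber product $Z:=\Bun_P^{\vlam_P,\ss}\times_{\Bun_G^{\vlam_G,\ss}}\Bun_P^{\vlam_P,\ss}$ by the relative position $w$ of the two $P$-reductions, obtaining locally closed substacks $Z_w$. The stratum of reductions in generic relative position is a $W_{M,G}$-torsor over the regular locus, hence has dimension $\dim\Bun_G^{\vlam_G,\ss}$ and dominates. For the remaining strata I would apply the deeper-reduction construction of Proposition~\ref{prop_deepred} to pass to a common sub-Levi $L$, and combine the slope inequality of Lemma~\ref{L:slope_deep_red_w} with a genus-one dimension count for the associated $L$-bundles to bound $\dim Z_w$. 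The expectation is $\dim Z_w\le\dim\Bun_G^{\vlam_G,\ss}$ for all $w$, with equality only for the $w$ realizing elements of $W_{M,G}$, yielding both the semismall equality $\dim Z=\dim\Bun_G^{\vlam_G,\ss}$ and the domination of every top-dimensional component. The main obstacle will be precisely this estimate $\dim Z_w<\dim\Bun_G^{\vlam_G,\ss}$ for the deeper relative positions: it is where the genus-one hypothesis is indispensable (the statement fails in higher genus), and it requires feeding the deeper-reduction slope inequalities into a sharp cohomological computation on $X$ to exclude extra top-dimensional, non-dominating components.
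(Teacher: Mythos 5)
Your treatment of properness (via Drinfel'd's compactification and the slope inequality) and of the generic Galois structure (invariance of $\vlam_P$ under $W_{M,G}$ by the uniqueness in Lemma~\ref{the_parabolic}, generic freeness via Lemma~\ref{L:act-free}, and the identification of the \'etale locus with the regular locus through the relative tangent complex) matches the paper's argument. But the smallness step, which you yourself flag as the main obstacle, is left as an ``expectation,'' and the route you propose for it cannot work. Over a genus-one curve every stack of the form $\Bun_Q$ for $Q$ a parabolic has dimension $(g-1)\dim Q=0$, so each relative-position stratum $Z_w\simeq \Bun_{P\cap wPw^{-1}}^{\ss}$ of the fiber product is $0$-dimensional \emph{whenever it is nonempty} --- the same dimension as $\Bun_G^{\vlam_G,\ss}$. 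There is therefore no strict inequality $\dim Z_w<\dim\Bun_G^{\vlam_G,\ss}$ to be had for the deeper relative positions, and no cohomological estimate will produce one; a nonempty stratum with $w\notin N_G(M)$ would either be a top-dimensional non-dominating component (killing smallness) or would dominate and destroy the claim that the generic fiber is a $W_{M,G}$-torsor.

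The missing idea is that these strata are \emph{empty}, not small. Two admissible $P$-reductions of degree $\vlam_P$ in relative position $w$ give, via $PwP/P\simeq P/(P\cap wPw^{-1})$, a reduction of $\F_G$ to $P\cap wPw^{-1}$, hence to the unique parabolic $Q\subseteq P$ with $Q\cap M=(P\cap wPw^{-1})\cap M$; by Corollary~\ref{deep_red_adm} this deeper reduction is again admissible, and the minimality of $P$ in Lemma~\ref{the_parabolic} forces $Q=P$, i.e.\ $w\in N_G(M)$ (this is Lemma~\ref{rel_pos_normalizer}, which itself rests on the statement that admissible reductions generically in relative position $w$ are so everywhere, proved by a slope comparison of the Pl\"ucker sub-bundles). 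Only then does the fiber product have exactly $|W_{M,G}|$ components, all of dimension $0$ and all dominating because they meet the regular locus. Without this emptiness argument your proof of smallness, and hence of the theorem, is incomplete. A secondary gap: you assert the regular locus is nonempty but give no argument; the paper deduces it from the $0$-dimensionality of the fiber product (generic quasi-finiteness plus generic smoothness), so it also depends on the stratification analysis above.
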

\begin{rem} It will follow from the proof of the main theorem that if $X$ is of  genus bigger than $1$ the map $p$ is small and \emph{birational} on its image and therefore it is a \emph{small resolution of singularities} of its image in $\Bun_G^{\vlam_G,\ss}$. 

For $G=\GL(n)$ this provides small resolutions of singularities of some particular Brill-Noether loci in the moduli stack  of semistable vector bundles.
\end{rem}

The proof of this theorem will be made through a series of lemmas which could be of independent interest.

\begin{prop}\label{proper_map} With the notations of Lemma~\ref{the_parabolic} we have
 that 
 \[
p:\Bun_P^{\vlam_P,\ss}\to\Bun_G^{\vlam_G,\ss}.
 \]
 is a proper map.
\end{prop}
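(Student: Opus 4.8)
The plan is to bootstrap properness from Drinfeld's compactification. By Section~\ref{S:compactif_BunB} the morphism $\ov{p}_P:\oBun_P^{\vlam_P}\to\Bun_G$ is proper and $\Bun_P^{\vlam_P}\hookrightarrow\oBun_P^{\vlam_P}$ is an open substack. Writing $U:=\Bun_G^{\vlam_G,\ss}$, which is open in $\Bun_G$, base change makes $\ov{p}_P^{-1}(U)\to U$ proper, and by definition $\Bun_P^{\vlam_P,\ss}=\Bun_P^{\vlam_P}\cap\ov{p}_P^{-1}(U)$ is open in $\ov{p}_P^{-1}(U)$. Hence it will be enough to prove that these two substacks coincide, i.e. that no point of the boundary $\oBun_P^{\vlam_P}\setminus\Bun_P^{\vlam_P}$ lies over a semistable $G$-bundle; then $p$ is literally the proper morphism $\ov{p}_P^{-1}(U)\to U$.

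To analyze the boundary I would use that a point of $\oBun_P^{\vlam_P}$ over a $G$-bundle $\F_G$ determines, on a dense open subset of $X$ where all the maps $\kappa_P^\lambda$ are bundle maps, an honest $P$-reduction of $\F_G$; since $\F_G/P\to X$ is proper this generic reduction extends uniquely to a genuine $P$-reduction $\bar\F_P$ of $\F_G$ over all of $X$, of some degree $\vlam_P'\in\vLam_{G,P}$. Passing from the point to $\bar\F_P$ only saturates the Pl\"ucker subsheaves, so the degree can only increase: $\vlam_P'\ge\vlam_P$ in the partial order of $\vLam_{G,P}$, with equality exactly when the point already lies in $\Bun_P^{\vlam_P}$. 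Moreover $\bar\F_P$ is a reduction of the same $\F_G$, so $\pi(\vlam_P')=\vlam_G$.

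Now suppose $\F_G$ is semistable. Applying the semistability inequality to the reduction $\bar\F_P$ gives $\phi_P(\vlam_P')\le\phi_G(\vlam_G)=\phi_P(\vlam_P)$, the last equality being the defining property of $\vlam_P$ in Lemma~\ref{the_parabolic}. On the other hand $\vlam_P'\ge\vlam_P$ together with the order-preservation of $\phi_P$ (Lemma~\ref{L:slope_order}) yields $\phi_P(\vlam_P')\ge\phi_P(\vlam_P)$. Hence $\phi_P(\vlam_P')=\phi_G(\vlam_G)$, so $\bar\F_P$ is an admissible reduction of degree $\vlam_P'$ with $\pi(\vlam_P')=\vlam_G$. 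The uniqueness clause of Lemma~\ref{the_parabolic} then forces $\vlam_P'=\vlam_P$, so by the previous paragraph the point lies in $\Bun_P^{\vlam_P}$. This gives $\ov{p}_P^{-1}(U)=\Bun_P^{\vlam_P,\ss}$ and the properness of $p$.

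The step I expect to be most delicate is the correct bookkeeping of degrees on the boundary: I must be sure that a boundary point produces a genuine $P$-reduction of the \emph{same} $\F_G$ whose degree satisfies $\vlam_P'\ge\vlam_P$, and that the defect lives in $\ker\pi\cap\vLam_G^{\pos}$. This last point is what guarantees that the equality $\vlam_P'=\vlam_P$ in $\vLam_{G,P}$ actually forces the defect to vanish: a defect in $\ker\pi=\langle\valpha_j\mid j\notin\I_P\rangle$ that became trivial in $\vLam_{G,P}$ would lie in $\span\{\valpha_i\mid i\in\I_P\}$ as well, hence be zero. Once this is in place, the argument uses only the order-preservation of $\phi_P$ and the uniqueness in Lemma~\ref{the_parabolic}; in particular it needs no assumption on the genus of $X$, and only the weak compactification of Section~\ref{S:compactif_BunB} is required, consistently with the remark there.
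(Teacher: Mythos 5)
Your argument is correct and is essentially the paper's own proof: both reduce to the properness of Drinfeld's compactification and rule out boundary points over semistable bundles by comparing the degree of the saturated reduction with $\vlam_P$ via the order-preservation of $\phi_P$ (Lemma~\ref{L:slope_order}) and the defining equality $\phi_P(\vlam_P)=\phi_G(\vlam_G)$. The only cosmetic difference is that you close the argument with the uniqueness clause of Lemma~\ref{the_parabolic}, whereas the paper derives the contradiction with semistability directly; these are equivalent since the classes of $\valpha_j$, $j\notin\I_P$, are linearly independent in $\vLam_{G,P}^{\Q}$.
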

\begin{proof}
We know from \cite[Section 1]{BG1} that the map $\overline{p}:\overline{\Bun_P}^{\vlam_P,\ss}\to\Bun_G^{\vlam_G,\ss}$ is proper so what we need to show is that in our situation a generalized reduction $(\F_G,\F_{M/[M,M]},k^\lambda)$ of $\F_G$ to $P$ (see Section~\ref{S:compactif_BunB}) is actually a true reduction to $P$. 
By our assumption $\phi_P(\vlam_P) = \phi_G(\vlam_G)$ and the saturation $\F'_P$ \cite[Section 1.3.3]{BG1} of the generalized reduction has degree $\vlam'_P$ that satisfies $\vlam'_P-\vlam_P\in\Q_+\{\valpha_i\mid i \in \I-\I_P\}$. 
Now using Lemma~\ref{L:slope_order} we get that $\phi_P(\vlam'_P)\ge \phi_P(\vlam_P) = \phi_G(\vlam_G)$ which contradicts the semistability of $\F_G$, unless $\vlam_P= \vlam'_P$, or in other words $(\F_G,\F_{M/[M,M]},k^\lambda)$ defines a reduction to $P$.
\end{proof}

\begin{lemma} Let $\F_P,\F'_P$ be two reductions of a semistable $G$-bundle $\F_G$ to a parabolic $P$ of degree $\vlam_P$ that are admissible (i.e. $\phi_P(\vlam_P) = \phi_G(\vlam_G)$). 
Suppose that the reductions are generically in relative position $w$. 
Then they are in relative position $w$ everywhere.
\end{lemma}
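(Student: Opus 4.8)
The plan is to argue pointwise and to rule out any degeneration of the relative position by a slope estimate. First I would recall that the relative position defines a map $X\to P\backslash G/P$, that its target is stratified by the double cosets $W_M\backslash W/W_M$ with closure relations governed by the Bruhat order, and that each stratum $PwP/P$ is locally closed with $\overline{PwP}=\bigsqcup_{w'\le w}Pw'P$. Since $X$ is irreducible and the map lands generically in the stratum indexed by $w$, it factors through $\overline{PwP}$, so the only possible failure is that at finitely many points $x_0$ the relative position specializes to some $w'<w$. It therefore suffices to show that no such specialization occurs.

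To obstruct a specialization I would pass to the Plücker description of Section~\ref{S:compactif_BunB}: each reduction is recorded by its line subbundles $\L^\lambda\hookrightarrow\V^\lambda_{\F_G}$ and $\L'^\lambda\hookrightarrow\V^\lambda_{\F_G}$ for $\lambda\in\Lam_{G,P}\cap\Lam_G^+$. The key numerical input is admissibility: by Lemma~\ref{L:slope_scalar} and equation~(\ref{E:slope induced vbdle}) one has $\deg\L^\lambda=\langle\vlam_P,\lambda\rangle=\langle\phi_P(\vlam_P),\lambda\rangle=\langle\phi_G(\vlam_G),\lambda\rangle=\mu(\V^\lambda_{\F_G})$, and likewise for $\L'^\lambda$; moreover $\V^\lambda_{\F_G}$ is semistable, since in characteristic $0$ the bundle associated to a semistable $G$-bundle is semistable. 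Thus $\L^\lambda$ and $\L'^\lambda$ are sub-line-bundles of maximal slope inside a semistable bundle of that slope. For two such that are distinct, the sheaf map $\L^\lambda\oplus\L'^\lambda\to\V^\lambda_{\F_G}$ is injective with image of degree $2\mu(\V^\lambda_{\F_G})$; a collision $\L^\lambda_{x_0}=\L'^\lambda_{x_0}$ would make this image non-saturated at $x_0$, so its saturation would be a rank-two subbundle of slope strictly exceeding $\mu(\V^\lambda_{\F_G})$, contradicting semistability. Hence distinct highest-weight lines never meet; this already settles the rank-one situation and, by the classical flag argument, the case $G=\GL_n$.

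For a general reductive $G$ the coincidence pattern of the highest-weight lines does not by itself pin down the relative position, so the same estimate must be applied to the finer Plücker data, namely the pairings of the highest-weight lines of $\F_P$ against the steps of the canonical filtration of $\V^\lambda_{\F_G}$ determined by $\F'_P$. Here I would bring in the deeper reductions of Proposition~\ref{prop_deepred}, replacing $\F_P,\F'_P$ by reductions $\F_{Q_1},\F_{Q_2}$ to $Q_1,Q_2\subseteq P$ that are still generically in relative position $w$ and whose induced Levi bundles $\F_{L_1},\F_{L_2}$ are globally isomorphic under the identification $L_1\xrightarrow{\sim}L_2$ given by $w$. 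This matches up the relevant graded pieces, so that every comparison becomes a map between associated bundles of one and the same Levi bundle, of equal slope, where the elementary fact that a generically nonzero map between semistable bundles of equal slope has no zeros applies and forbids any new vanishing at $x_0$. Running this over all $\lambda$ shows the relative position cannot drop, hence it equals $w$ everywhere.

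The hard part will be the bookkeeping in the last step: making precise that the relative position in $P\backslash G/P$ is faithfully detected by these Plücker pairings, and that after passing to the deeper reductions each pairing indeed lands between bundles of equal slope. The cleanest route is probably to reformulate the whole degeneration as the meromorphy of the rational reduction of $\F_G$ to the common Levi $L_1$ determined by $\F_{L_1}\cong\F_{L_2}$, and then to show, using the slope inequality of Lemma~\ref{L:slope_deep_red_w} together with admissibility, that this reduction has no poles; I would keep this reformulation in reserve should the direct Plücker computation become unwieldy.
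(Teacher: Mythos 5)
Your overall strategy --- obstruct any drop of relative position by a slope estimate, after replacing $\F_P,\F'_P$ by the deeper reductions of Proposition~\ref{prop_deepred} --- is the same as the paper's, and your rank-one observation (two admissible highest-weight line subbundles of $\V^\lambda_{\F_G}$ cannot collide, by saturating their sum inside a semistable bundle of the same slope) is a correct variant of the basic estimate. But the step you defer as ``bookkeeping'' is precisely where the real content lies, and as written it is a gap. Before any comparison map can be said to go between bundles of \emph{equal} slope, you must prove that the deeper reductions are themselves admissible, i.e.\ $\phi_{Q_1}(\vlam_{Q_1})=\phi_{Q_2}(\vlam_{Q_2})=\phi_P(\vlam_P)$. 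This is not automatic: the paper derives it by combining the inequality $w^{-1}\phi_{Q_1}(\vlam_{Q_1})\ge\phi_{Q_2}(\vlam_{Q_2})$ of Lemma~\ref{L:slope_deep_red_w} with the positivity of the differences $\phi_P(\vlam_P)-w^{-1}\phi_{Q_1}(\vlam_{Q_1})=\sum_{i\in\I_P}n_i\,w^{-1}\valpha_i$ and $\phi_P(\vlam_P)-\phi_{Q_2}(\vlam_{Q_2})=\sum_{i\in\I_P}m_i\valpha_i$, and with a support condition (\cite[Lemma 4.3]{Schi}) forcing $n_i=0$ whenever $w^{-1}\valpha_i\notin R_P$; only then does nondegeneracy of the pairing give the equality. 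This statement is extracted as Corollary~\ref{deep_red_adm} and reused in Lemma~\ref{rel_pos_normalizer}, so it cannot be absorbed into routine verification.

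The second unfinished point is the translation of ``relative position $w$ everywhere'' into bundle-theoretic terms, which you also leave open. The paper does this via \cite[Proposition 4.5]{Schi}: once one is reduced to $w\in N_G(M)$ (which is exactly what the deeper reductions achieve), generic relative position $w$ produces an \emph{injective} map of vector bundles $V^\lambda[\lambda+\Z R_M]_{\F_M}\to V^\lambda[w\lambda+\Z R_M]_{\F'_M}$ that is generically an isomorphism; equality of slopes (hence of rank and degree) then forces it to be an isomorphism everywhere, with no appeal to semistability of $\V^\lambda_{\F_G}$. Relatedly, your claim that the non-collision of highest-weight lines ``already settles the case $G=\GL_n$'' is overstated: the relative position of two flags is detected by all incidences $\dim(F_i\cap F'_j)$, not merely by the coincidence loci of $\det F_k$ and $\det F'_k$ inside $\wedge^k\V$, so even there you need the finer filtration-level comparison that your last paragraph only sketches.
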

\begin{proof}
Recall the deeper reduction from Section~\ref{sec_deepred} and let us consider first the case when our reductions are equal to their deeper ones. This amounts to suppose that $w\in N_G(M)$.

From \cite[Proposition 4.5]{Schi} being generically in relative position $w$ means that the map
\[
V^\lambda[\lambda+\Z R_M]_{\F_M}\hookrightarrow V^\lambda_{\F_G}
\]
factorizes through 
\[
V^\lambda[\ge w\lambda+\Z R_M]_{\F'_P}\hookrightarrow V^\lambda_{\F_G}
\]
and induces an injective morphism \cite[Proposition 4.5]{Schi} of vector bundles 
\[
V^\lambda[\lambda+\Z R_M]_{\F_M} \longrightarrow V^\lambda[w\lambda+\Z R_M]_{\F'_M}                            
\]
which is an isomorphism on an open nonempty subset of $X$.
The first vector bundle has slope $\<\lambda,\phi_P(\vlam_P)\>$ (see equation (\ref{E:slope induced vbdle})) and the second has slope $\<w\lambda,\phi_P(\vlam_P)\> = \<\lambda,w^{-1}\phi_G(\vlam_G)\> = \<\lambda,\phi_G(\vlam_G)\>$ since the action of the Weyl group on $\vLam_{G,G}$ is trivial. 
An injective map of vector bundles of the same slope which is generically an isomorphism is an isomorphism. 
This proves that the two reductions are in relative position $w$ everywhere.

In the general situation we want to prove first that the slopes of the deeper reductions are the same as the slopes of the $P$-bundles. We have
\[
\phi_P(\vlam_P)-w^{-1}\phi_{Q_1}(\vlam_{Q_1}) = \sum_{i\in \I_P} n_iw^{-1}\valpha_i \ge 
\phi_P(\vlam_P)-\phi_{Q_2}(\vlam_{Q_2}) = \sum_{i\in \I_P} m_i\valpha_i\ge 0.
\]
Using \cite[Lemma 4.3]{Schi} we obtain that $n_i=0$ if $w^{-1}\valpha_i\not\in R_P$. Now this implies that $\phi_P(\vlam_P) = \phi_{Q_1}(\vlam_{Q_1}) = \phi_{Q_2}(\vlam_{Q_2})$ because of the non-degeneracy of the pairing between the characters and cocharacters of $Q_1$.

The same argument as in the first case proves that the deeper reductions are in relative position $w$ everywhere.
\end{proof}

\begin{cor}[of the proof]\label{deep_red_adm} Let $\F_P,\F'_P$ be two reductions of a $G$-bundle $\F_G$. Suppose that they are in relative position $w$ and that they are admissible (see Definition~\ref{D:admissible}). Then the deeper reductions (see Section~\ref{sec_deepred}) are also admissible.
\end{cor}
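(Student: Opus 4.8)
The plan is to harvest the conclusion directly from the computation carried out in the proof of the preceding lemma: almost nothing new is needed beyond recognizing that admissibility is exactly the extra input that upgrades the slope equalities found there into the desired statement.

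First I would reduce to the situation of the preceding lemma by checking that $\F_P$ and $\F'_P$ necessarily have the same degree. If their degrees are $\vlam_P,\vlam'_P$, then both project to the degree $\vlam_G$ of $\F_G$ under $\pi:\vLam_{G,P}\to\vLam_{G,G}$, so $\vlam_P-\vlam'_P\in\ker\pi=\<\valpha_j\mid j\in\I-\I_P\>$, a free abelian group; on the other hand admissibility (Definition~\ref{D:admissible}) gives $\phi_P(\vlam_P)=\phi_G(\vlam_G)=\phi_P(\vlam'_P)$, so $\vlam_P-\vlam'_P\in\ker\phi_P$, which is the torsion of $\vLam_{G,P}$. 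Exactly as in the uniqueness part of Lemma~\ref{the_parabolic}, an element that is both free and torsion vanishes, so $\vlam_P=\vlam'_P$ and we are in the setting of the preceding lemma.

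Next I would invoke the slope computation already performed there. When $w\in N_G(M)$ the deeper reductions of Proposition~\ref{prop_deepred} coincide with $\F_P,\F'_P$ themselves, so there is nothing to prove. In the general case the proof of the preceding lemma produced the chain of (in)equalities
\[
\phi_P(\vlam_P)-w^{-1}\phi_{Q_1}(\vlam_{Q_1})=\sum_{i\in\I_P}n_i\,w^{-1}\valpha_i\ \ge\ \phi_P(\vlam_P)-\phi_{Q_2}(\vlam_{Q_2})=\sum_{i\in\I_P}m_i\valpha_i\ \ge\ 0,
\]
and then used \cite[Lemma 4.3]{Schi} (forcing $n_i=0$ whenever $w^{-1}\valpha_i\notin R_P$) together with the non-degeneracy of the pairing between characters and cocharacters of $Q_1$ to collapse this into
\[
\phi_P(\vlam_P)=\phi_{Q_1}(\vlam_{Q_1})=\phi_{Q_2}(\vlam_{Q_2}).
\]
Since this is purely a statement about the degrees of the reductions and the slope map, it is the part of the preceding argument that does not appeal to semistability of $\F_G$, so it remains available verbatim in the present generality.

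Finally I would feed in admissibility. By hypothesis $\phi_P(\vlam_P)=\phi_G(\vlam_G)$, so the last display gives $\phi_{Q_1}(\vlam_{Q_1})=\phi_{Q_2}(\vlam_{Q_2})=\phi_G(\vlam_G)$, which is precisely the admissibility of the deeper reductions $\F_{Q_1},\F_{Q_2}$ in the sense of Definition~\ref{D:admissible}. The only genuinely load-bearing step — and hence the one I would be most careful about — is re-reading the chain above to confirm that \cite[Lemma 4.3]{Schi} and the non-degeneracy argument really force all three slopes to agree rather than merely to be comparable; everything else is formal bookkeeping layered on top of the preceding lemma.
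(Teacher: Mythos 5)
Your proposal is correct in substance and follows the paper's route exactly: the paper gives no separate argument for this corollary, intending it to be read off from the displayed slope chain in the proof of the preceding lemma, which is precisely what you do, and your preliminary observation that admissibility together with $\pi(\vlam_P)=\pi(\vlam'_P)=\vlam_G$ forces the two degrees to coincide (by the $\ker\pi\cap\ker\phi_P=0$ argument from Lemma~\ref{the_parabolic}) is a worthwhile point that the paper leaves implicit.

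The one claim you should not make as stated is that the chain of inequalities ``does not appeal to semistability of $\F_G$.'' The terminal inequality $\phi_P(\vlam_P)-\phi_{Q_2}(\vlam_{Q_2})=\sum_{i\in\I_P}m_i\valpha_i\ge 0$ is not a formal property of degrees and the slope map: it is exactly the semistability inequality for the reduction $\F_{Q_2}$ of $\F_G$ combined with admissibility, namely $\phi_{Q_2}(\vlam_{Q_2})\le\phi_G(\vlam_G)=\phi_P(\vlam_P)$. For a non-semistable $\F_G$ it can fail (already for $\GL_2\supset B$ and $\O(1)\oplus\O(-1)$ the deeper reduction has strictly larger slope), and without it the collapse of the chain to three equal slopes is not justified. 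So the corollary should be read with the standing hypothesis of the lemma that $\F_G$ is semistable --- which is how it is invoked in Lemma~\ref{rel_pos_normalizer} --- rather than in the ``present generality'' you describe; with that reading your argument is complete and coincides with the paper's.
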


\begin{lemma}\label{rel_pos_normalizer} Using the notations of Lemma~\ref{the_parabolic}, if we have two reductions $\F_P,\F'_P$ of a semistable $G$-bundle, both of which are of degree $\vlam_P$, which are in relative position $w$, then $w\in N_G(M)$. 
\end{lemma}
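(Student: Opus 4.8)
The plan is to feed the two admissible reductions into the deeper-reduction machinery of Section~\ref{sec_deepred} and then invoke the minimality of the parabolic $P$ produced by Lemma~\ref{the_parabolic}.

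First I would apply Proposition~\ref{prop_deepred} to $\F_P$ and $\F'_P$; here $P_1=P_2=P$, so $\I_1=\I_2=\I_P$, and we obtain deeper reductions $\F_{Q_1}$ of $\F_P$ and $\F_{Q_2}$ of $\F'_P$ to parabolics $Q_1,Q_2\subseteq P$ attached to the root sets
\[
\I_1'=\{i\in\I_P\mid \exists\, j\in\I_P:\ w(\alpha_j)=\alpha_i\},\qquad \I_2'=\{i\in\I_P\mid \exists\, j\in\I_P:\ w^{-1}(\alpha_j)=\alpha_i\}.
\]
Since $\F_P$ and $\F'_P$ both carry the admissible degree $\vlam_P$ from Lemma~\ref{the_parabolic} (so that $\phi_P(\vlam_P)=\phi_G(\vlam_G)$), Corollary~\ref{deep_red_adm} tells us that the deeper reduction $\F_{Q_1}$ is again admissible, i.e.\ $\phi_{Q_1}(\vlam_{Q_1})=\phi_G(\vlam_G)$. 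Furthermore, because $\F_{Q_1}$ is a reduction of the $G$-bundle $\F_G$ of degree $\vlam_G$, its degree $\vlam_{Q_1}$ projects to $\vlam_G$ under the natural map $\vLam_{G,Q_1}\to\vLam_{G,G}$. Hence the pair $(Q_1,\vlam_{Q_1})$ satisfies both conditions (\ref{E:parab-lemma-slope}) and (\ref{E:parab-lemma-proj}) of Lemma~\ref{the_parabolic}.

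The decisive step is then the minimality clause of that lemma: every parabolic admitting such a degree has its set of roots containing $\I_{\vlam_G}=\I_P$. Applied to $Q_1$ this yields $\I_{Q_1}\supseteq\I_P$, while $Q_1\subseteq P$ forces $\I_{Q_1}\subseteq\I_P$; therefore $Q_1=P$ and $\I_1'=\I_P$. By the definition of $\I_1'$, every simple root $\alpha_i$ with $i\in\I_P$ lies in $\{w(\alpha_j):j\in\I_P\}$; as $w$ is injective on roots and $\I_P$ is finite, $w$ restricts to a bijection of the simple roots of $M$. Since $w\in W$ preserves $\Phi$ and now also preserves the $\Z$-span of $\{\alpha_i:i\in\I_P\}$, it preserves $R_M=\Phi\cap\Z\{\alpha_i:i\in\I_P\}$, so a lift of $w$ to $N_G(T)$ normalizes $M$; that is, $w\in N_G(M)$.

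I expect the only real subtlety to be the invariance of the slope under passage to the deeper reduction, namely $\phi_{Q_1}(\vlam_{Q_1})=\phi_G(\vlam_G)$; but this is precisely Corollary~\ref{deep_red_adm} distilled from the preceding lemma. With that input secured, the remaining argument is just the minimality of $P$ together with the elementary combinatorics of the set $\I_1'$.
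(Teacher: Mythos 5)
Your argument is correct and follows essentially the same route as the paper: both pass to the deeper reductions, invoke Corollary~\ref{deep_red_adm} to get an admissible reduction to a parabolic $Q_1\subseteq P$, and then use the minimality clause of Lemma~\ref{the_parabolic} to force $Q_1=P$, whence $w$ permutes the simple roots of $M$ and so normalizes it. The only (cosmetic) difference is that the paper reaches $Q_1$ via the identification $PwP/P\simeq P/(P\cap wPw^{-1})$ rather than directly through the root sets $\I_1',\I_2'$ of Proposition~\ref{prop_deepred}, and you spell out the final combinatorial step ($\I_1'=\I_P\Rightarrow w\in N_G(M)$) more explicitly than the paper does.
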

\begin{proof}
 Being in relative position $w$ means that we have a section
 \[
  s:X\to (\F_P\stackrel{P}{\times}G)/P
 \]
which lands in
\[
 \F_P\stackrel{P}{\times}PwP/P.
\]
But the quotient stack (actually variety) $PwP/P$ is isomorphic, as a left $P$-space, to $P/P\cap wPw^{-1}$. So the above section can be rewritten as
\[
 s:X\to \F_P/P\cap wPw^{-1}
\]
which is equivalent to a reduction of $\F_G$ to $G'=P\cap wPw^{-1}$. Since $G'\cap M$ is a parabolic subgroup of $M$ there is a unique parabolic subgroup of $G$, say $Q$, included in $P$, such that $Q\cap M = G'\cap M$. 
From Corollary~\ref{deep_red_adm} we obtain an admissible reduction of $\F_G$ to $Q$ which implies $P=Q$ because $P$ was minimal with this property (see Lemma~\ref{the_parabolic}). 
But this forces $w$ to normalize the subgroup $M$.
\end{proof}

\begin{prop}\label{irred_cp} Let $X$ be an elliptic curve and let $\vlam_G,P$ and $\vlam_P$ be as in Lemma~\ref{the_parabolic}. 
The irreducible components of $\Bun_P^{\vlam_P,\ss}\times_{\Bun_G}\Bun_P^{\vlam_P,\ss}$ are in bijection with the relative Weyl group $W_{M,G}$ and are all of dimension $0$.
\end{prop}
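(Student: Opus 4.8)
The plan is to stratify the fibre product by the relative position of the two reductions and to analyse each stratum separately. Given a point $(\F_G,\F_P,\F'_P)$ of $\Bun_P^{\vlam_P,\ss}\times_{\Bun_G}\Bun_P^{\vlam_P,\ss}$, the two reductions are generically in some relative position $w\in W_M\backslash W/W_M$, and I would first invoke the lemma preceding Corollary~\ref{deep_red_adm}: since both reductions are admissible of the same degree $\vlam_P$, being generically in relative position $w$ forces them to be in relative position $w$ \emph{everywhere}. Hence the locus $Z_w$ where the generic relative position equals $w$ is a locally closed substack, and these strata partition the fibre product. By Lemma~\ref{rel_pos_normalizer} the stratum $Z_w$ is empty unless $w\in N_G(M)$; as such a $w$ normalises $W_M$ one has $W_M w W_M=wW_M$, so the nonempty strata are indexed bijectively by the image of $W_{M,G}=N_G(M)/M$ in $W_M\backslash W/W_M$.

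Next I would identify each nonempty stratum. Two reductions in relative position $w$ everywhere are the same datum as a reduction of $\F_G$ to the stabiliser $R_w:=P\cap wPw^{-1}$ of a point of the orbit $PwP/P\subset G/P\times G/P$; since $w\in N_G(M)$ this group has Levi $M$ and unipotent radical $U\cap wUw^{-1}$, where $U=R_u(P)$. Thus $Z_w$ is isomorphic to the stack of semistable reductions of degree $\vlam_P$ to $R_w$, and the projection $R_w\twoheadrightarrow M$ gives a morphism $Z_w\to \Bun_M^{\vlam_P,\ss}$ (the target lands in the semistable locus by Lemma~\ref{lem_Gsst_Msst}). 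This morphism is an iterated affine fibration whose relative dimension over a bundle $\F_M$ equals $-\chi(X,(\n_w)_{\F_M})$, where $\n_w:=\mathrm{Lie}(U\cap wUw^{-1})$.

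The crucial point is the vanishing of this relative dimension, and it is here that the genus one hypothesis enters. On an elliptic curve Riemann--Roch gives $-\chi(X,(\n_w)_{\F_M})=-\deg((\n_w)_{\F_M})$, while $\dim\Bun_M^{\vlam_P}=(g-1)\dim M=0$. The weights of $\n_w$ are roots of $G$, so by the additive form of equation~(\ref{E:slope induced vbdle}) one gets $\deg((\n_w)_{\F_M})=\<\phi_M(\vlam_P),\sum_\beta\beta\>$, the sum being over these weights. But $\phi_M(\vlam_P)=\phi_P(\vlam_P)=\phi_G(\vlam_G)$ lies in $\vLam_{Z(G)}^\Q$ by admissibility, hence pairs to zero with every root of $G$; therefore $\deg((\n_w)_{\F_M})=0$. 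Consequently $Z_w\to\Bun_M^{\vlam_P,\ss}$ has relative dimension $0$ and $\dim Z_w=\dim\Bun_M^{\vlam_P,\ss}=0$. (For $g>1$ this degree is still $0$ but the term $(g-1)\dim R_w$ survives and strictly decreases as $w$ leaves the identity, which is the source of the birational small-resolution statement in that range.)

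Finally I would settle irreducibility and nonemptiness. The component $\Bun_M^{\vlam_P}$ is smooth and connected, so $\Bun_M^{\vlam_P,\ss}$ is irreducible; the split inclusion $M\hookrightarrow R_w$ produces, for every $\F_M$, the reduction $\F_M\overset{M}{\times}R_w$, and the associated $G$-bundle is semistable by Lemma~\ref{lem_Msst_Gsst}. This shows $Z_w\to\Bun_M^{\vlam_P,\ss}$ is surjective, the two induced $P$-bundles having degree $\vlam_P$ because $\vlam_P$ is $W_{M,G}$-invariant (for $w\in W_{M,G}$ the degree $w\vlam_P$ again satisfies the two conditions of Lemma~\ref{the_parabolic}, so $w\vlam_P=\vlam_P$ by uniqueness). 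An affine fibration with connected fibres over an irreducible base is irreducible, hence each $Z_w$ is irreducible, nonempty and of dimension $0$. Since distinct relative positions give disjoint strata, the irreducible components of the fibre product are exactly the $\overline{Z_w}$ for $w\in W_{M,G}$, all of dimension $0$. I expect the main obstacle to be the clean identification $Z_w\cong\Bun_{R_w}^{\vlam_P,\ss}$ together with the degree computation; the rest is bookkeeping once one knows, via Lemma~\ref{rel_pos_normalizer} and the preceding lemma, that only normalising relative positions occur and that they occur everywhere rather than merely generically.
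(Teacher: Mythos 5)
Your argument is correct and follows essentially the same route as the paper's: stratify the fibre product by relative position, use Lemma~\ref{rel_pos_normalizer} to restrict to $w\in W_{M,G}$, and identify the stratum for $w$ with $\Bun_{P\cap wPw^{-1}}^{\vlam_P,\ss}$. The only substantive difference is that you spell out why this stack has dimension $0$ in genus one (the fibration over $\Bun_M^{\vlam_P,\ss}$ together with the vanishing of $\deg((\n_w)_{\F_M})$ forced by admissibility), a point the paper asserts in one line; this is a useful elaboration rather than a different proof.
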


\begin{proof} There is a natural map
\[
 \Bun_P^{\vlam_P,\ss}\times_{\Bun_G}\Bun_P^{\vlam_P,\ss} \to P\backslash G\slash P = \sqcup_{w\in W} P\backslash PwP\slash P
\]	
which sends a pair of reductions to their relative position at some fixed point, say $x_0$, of the curve. From Lemma~\ref{rel_pos_normalizer} we know that only the positions $w\in W_{M,G}$ can occur. Moreover, from the proof of the same lemma, the preimage of $P\backslash PwP\slash P$ is isomorphic to the stack $\Bun_{P\cap wPw^{-1}}^{\vlam_P,\ss}$. 
The latter is connected and, since $X$ is of genus 1, of dimension $0$. 

It follows that the irreducible components of the desired fibered product are in bijection with $W_{M,G}$ and are all of dimension $0$.
\end{proof}

\begin{lemma}\label{etale_map} Let $X$ be an elliptic curve.
With the notations of Lemma~\ref{the_parabolic} we have that the map 
\[
p:\Bun_P^{\vlam_P,\ss}\to\Bun_G^{\vlam_G,\ss}
\]
is \'etale (exactly) on the regular locus (which is not empty).
\end{lemma}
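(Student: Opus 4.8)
The plan is to read off when $p$ is étale from its relative tangent complex and to match the resulting vanishing condition with regularity. Both $\Bun_P^{\vlam_P,\ss}$ and $\Bun_G^{\vlam_G,\ss}$ are open substacks of the smooth stacks $\Bun_P$ and $\Bun_G$, so $p$ is a morphism of smooth stacks and is étale at a point $\F_P$ exactly when its relative tangent complex vanishes there. I recall that for an affine group $H$ the tangent complex of $\Bun_H$ at $\F_H$ is $R\Gamma(X,\mathfrak{h}_{\F_H})[1]$, with $H^0(X,\mathfrak h_{\F_H})$ the infinitesimal automorphisms and $H^1(X,\mathfrak h_{\F_H})$ the deformations.

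First I would compute the relative tangent complex of $p$. Since $\F_G=\F_P\overset{P}{\times}G$ one has $\g_{\F_G}=\g_{\F_P}$, and $p$ is the inducing map attached to the inclusion $\p\hookrightarrow\g$ of $P$-modules. The short exact sequence of $P$-modules
\[
0\to\p\to\g\to\g/\p\to 0
\]
yields a short exact sequence of the associated bundles on $X$; applying $R\Gamma$ and the shift $[1]$ identifies the relative tangent complex of $p$ at $\F_P$ with $R\Gamma(X,(\g/\p)_{\F_P})$, placed in degrees $0$ and $1$. Hence $p$ is étale at $\F_P$ if and only if $H^0(X,(\g/\p)_{\F_P})=H^1(X,(\g/\p)_{\F_P})=0$, which by definition is exactly the condition that $\F_P$ be regular. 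This proves that $p$ is étale precisely on the regular locus.

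It then remains to show the regular locus is non-empty, and since $\Bun_P^{\vlam_P,\reg}$ is the preimage of $\Bun_M^{\vlam_P,\reg}$ it suffices to produce a semistable $M$-bundle $\F_M$ of degree $\vlam_P$ with $H^\bullet(X,(\g/\p)_{\F_M})=0$. I would first record that $\deg(\g/\p)_{\F_M}=0$: the weights of $\g/\p$ are the roots $-\beta$ with $\beta\in\Phi^+\setminus\Phi^+_M$, their sum is $-(2\rho-2\rho_M)$, and since $\phi_P(\vlam_P)=\phi_G(\vlam_G)$ lies in $\vLam_{Z(G)}^\Q$ it pairs to $0$ with every root of $G$; by the slope formula~(\ref{E:slope induced vbdle}) applied to these weights the degree therefore vanishes. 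As $X$ is an elliptic curve, $\chi(X,(\g/\p)_{\F_M})=\deg(\g/\p)_{\F_M}=0$, so it is enough to arrange $H^0(X,(\g/\p)_{\F_M})=0$.

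Finally I would exhibit such an $\F_M$ by a genericity argument. Running Lemma~\ref{the_parabolic} for the pair $(M,\vlam_P)$ shows that the smallest parabolic it attaches is $M$ itself, so by Remark~\ref{R:sstable are stable} every semistable $M$-bundle of degree $\vlam_P$ is in fact stable; moreover the Levi $M$ occurring here is a product of general linear groups (see the table of Section~4), so such a bundle is a tuple $(V_1,\dots,V_r)$ of stable vector bundles of a common slope, and $(\g/\p)_{\F_M}$ decomposes as a sum of the bundles $V_k^\vee\otimes V_l$ over the pairs $k\neq l$ occurring in $\mathfrak{u}^-$. Thus $H^0(X,(\g/\p)_{\F_M})=\bigoplus_{k\neq l}\Hom(V_k,V_l)$, and since a nonzero homomorphism between stable bundles of equal slope is an isomorphism, choosing the $V_k$ pairwise non-isomorphic (possible because each factor carries a positive-dimensional family of stable bundles of the relevant numerical type) forces this to vanish; then $H^1$ vanishes by $\chi=0$, so $\F_M$ is regular and the locus is non-empty. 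I expect the étale/regular equivalence to be formal deformation theory, while the genuine obstacle is this last step: pinning down the degree computation and the stability of the constituents so as to guarantee a single bundle with no global sections.
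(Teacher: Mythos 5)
Your identification of the \'etale locus with the regular locus via the relative tangent complex is exactly the paper's argument: the triangle $\bT_{\Bun_P}\to p^*\bT_{\Bun_G}\to\bT_p[1]$ together with $\g_{\F_G}=\g_{\F_P}$ reduces everything to the vanishing of $H^0$ and $H^1$ of $(\g/\p)_{\F_P}$, which is the definition of regularity. That part is fine.

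The gap is in your non-emptiness argument. Your computation that $\deg(\g/\p)_{\F_M}=0$ (hence $\chi=0$ on an elliptic curve, hence $h^0=h^1$) is correct, and so is the reduction to finding one $\F_M$ with no global sections in $(\g/\p)_{\F_M}$. But the construction you then give is really a type-$A$ (indeed $\GL_n$-specific) argument: Proposition~\ref{prop_stable} and Corollary~\ref{C:Levi type A} only tell you that $M^{\ad}$ is a product of $\PGL_{n_k}$'s, not that $M$ is a product of general linear groups, and, more seriously, the $M$-module $\g/\p=\u^-$ does \emph{not} in general decompose as $\bigoplus_{k\neq l}V_k^\vee\otimes V_l$. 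Already for $G$ of type $C$ or $D$ with $M$ of type $A_1\times\dots\times A_1$ (see the table in Section~4), $\u^-$ contains irreducible $M$-constituents such as symmetric squares of the rank-two factors, and for these the vanishing of $H^0$ is not forced by choosing the factors pairwise non-isomorphic; one would need a separate argument that each irreducible constituent (a semistable bundle of slope $0$) admits no trivial sub-line-bundle for generic $\F_M$. So as written the final step only establishes non-emptiness for $G=\GL_n$. The paper sidesteps this entirely: Proposition~\ref{irred_cp} shows $\Bun_P^{\vlam_P,\ss}\times_{\Bun_G}\Bun_P^{\vlam_P,\ss}$ has dimension $0$, so $p$ is generically quasi-finite and dominant, and generic smoothness in characteristic $0$ then gives a non-empty open set where $p$ is \'etale; combined with your first paragraph this is exactly the non-emptiness of the regular locus, with no case analysis on $M$ or on the structure of $\u^-$. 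If you want to keep an explicit construction, you would need to handle all irreducible $M$-constituents of $\u^-$, not just the $\Hom(V_k,V_l)$-type ones.
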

\begin{proof}
From the dimension estimates of Lemma~\ref{irred_cp} it follows that the map $p$ is generically quasi-finite and dominant.
Using generic smoothness we deduce that $p$ is \'etale on a non-empty open subspace of $\Bun_P^{\vlam_P,\ss}$. 

The tangent complex $\bT_p$ of $p$ sits in an exact triangle:
\[
 \bT_{\Bun_P^{\vlam_P,\ss}}\longrightarrow p^*\bT_{\Bun_G^{\vlam_G,\ss}}\longrightarrow {\bT}_{p}[1]\stackrel{+1}{\longrightarrow}
\]
and $p_P$ is \'etale exactly on the vanishing locus of $\bT_{p}$. 

By taking the fiber of the above exact triangle at some point $\F_P\in\Bun_P^{\vlam_P,\ss}$ we obtain the long exact sequence:

\begin{align*}
0\to H^0(X,\p_{\F_P}) & \stackrel{a'}{\to} H^0(X,\g_{\F_P})\to H^0(X,(\g/\p)_{\F_P})\to \\
& \to H^1(X,\p_{\F_P})\stackrel{a}{\to} H^1(X,\g_{\F_P})\to H^1(X,(\g/\p)_{\F_P})\to 0
\end{align*}

The map $p_P$ is \'etale at $\F_P$ if and only if $a$ and $a'$ are isomorphisms. 
This in turn is equivalent to $H^i(X,(\g\slash\p)_{\F_P}) = 0$ for $i=0,1$, i.e. $\F_P$ is regular.
\end{proof}

\begin{lemma}\label{L:M-reg iso P-reg} For $X$ of genus 1, keeping the notations of Lemma~\ref{the_parabolic}, we have that the restriction of 
\[
q:\Bun_P^{\vlam_P,\ss}\to \Bun_M^{\vlam_P,\ss}
\]
to the regular locus is an isomorphism.
\end{lemma}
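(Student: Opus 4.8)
The plan is to control $q$ fibrewise through the unipotent radical. Write $U=R_u(P)$ and recall the splitting $M\hookrightarrow P$, which yields a canonical section $s:\Bun_M^{\vlam_P,\ss}\to\Bun_P^{\vlam_P,\ss}$, well-defined on the semistable loci by Lemma~\ref{lem_Msst_Gsst} and the cartesian squares of Remark~\ref{R:diag_cartes}, and satisfying $q\circ s=\id$. It therefore suffices to prove that for every regular $\F_M\in\Bun_M^{\vlam_P,\ss,\reg}$ the fibre $q^{-1}(\F_M)$ is a single reduced point: a unique object $s(\F_M)$ with no nontrivial automorphisms.

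First I would identify the fibre. A $P$-bundle inducing a fixed $\F_M$ is the same datum as a torsor on $X$ under the twisted unipotent group scheme $U_{\F_M}:=\F_M\overset{M}{\times}U$, the trivial torsor corresponding to the canonical lift $s(\F_M)$; thus $q^{-1}(\F_M)\simeq\Bun_{U_{\F_M}}(X)$. Infinitesimally this matches the fact that the relative tangent complex of $q$ at $\F_P$ is $R\Gamma(X,\u_{\F_P})[1]$, so the fibre is rigid exactly when $H^\bullet(X,\u_{\F_P})=0$.

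The key step, and the one place where genus $1$ is used, is to convert the regularity hypothesis into this vanishing. As $M$-representations one has $\u^\vee\cong\g/\p$ (Killing duality between the nilradical and its opposite), and since $X$ is an elliptic curve $\omega_X\cong\cO_X$; Serre duality then gives
\[
 H^i(X,\u_{\F_M})\;\cong\;H^{1-i}\!\big(X,(\g/\p)_{\F_M}\big)^{\vee}.
\]
Hence $\F_M$ regular, i.e. $H^\bullet(X,(\g/\p)_{\F_M})=0$, is equivalent to $H^\bullet(X,\u_{\F_M})=0$. Because $U$ acts trivially on the associated graded of the lower central series of $\u$, the same vanishing holds with $\F_P$ in place of $\F_M$, reconciling this with the tangent computation of Lemma~\ref{etale_map}.

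It then remains to run a dévissage. Choose a central filtration $U=U_1\supseteq\cdots\supseteq U_{n+1}=1$ by $M$-stable normal subgroups with vector-group quotients $U_j/U_{j+1}$; twisting by $\F_M$ gives group schemes $(U_j)_{\F_M}$ whose successive quotients are the vector bundles $(U_j/U_{j+1})_{\F_M}$, with $\bigoplus_j (U_j/U_{j+1})_{\F_M}\cong\u_{\F_M}$. By the previous paragraph each such $V$ has $H^\bullet(X,V)=0$, so $\Bun_V(X)$ is a point: the trivial torsor is the only one ($H^1=0$) and it has no nontrivial automorphisms ($H^0=0$). Induction on the length of the filtration (starting from $U_{n+1}=1$), using that each stage $\Bun_{(U_j)_{\F_M}}\to\Bun_{(U_j/U_{j+1})_{\F_M}}$ is a fibration whose base is a point and whose fibre over the trivial torsor is $\Bun_{(U_{j+1})_{\F_M}}$, shows $q^{-1}(\F_M)\simeq\Bun_{U_{\F_M}}(X)$ is a point. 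Since regularity of $\F_P$ and of $\F_M$ coincide by the displayed equivalence, $q$ restricts to an isomorphism onto $\Bun_M^{\vlam_P,\ss,\reg}$ with inverse $s$. The main obstacle is the honest treatment of the non-abelian twisted group scheme $U_{\F_M}$: one must make precise that its stack of torsors fibres over those of its abelian subquotients and that the cohomological vanishing genuinely propagates through the non-abelian extensions.
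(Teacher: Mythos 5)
Your argument is correct and is essentially the proof the paper intends: the paper's own proof is a one-line deferral to \cite[Proposition 3.2]{Lasz}, whose argument is exactly this identification of the fibre of $q$ with torsors under the twisted unipotent radical, the Serre-duality translation (using $\omega_X\cong\cO_X$ in genus $1$) of regularity into $H^\bullet(X,\u_{\F_M})=0$, and the dévissage along a central filtration of $U$ with vector-group quotients. Nothing essential is missing.
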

\begin{proof} The same argument as in \cite[Proposition 3.2]{Lasz} works.
\end{proof}
\begin{rem}
Both the regularity and genus 1 are important for this isomorphism to hold. 
\end{rem}

\begin{proof}[Proof of Theorem~\ref{thm_main}] 

Let us first prove that the group $W_{M,G}$ acts on the connected component $\Bun_M^{\vlam_P}$. 
It amounts to verifying that  $W_{M,G}(\vlam_P)=\vlam_P$. This makes sense since $W_{M,G}$ normalizes $M$ and hence acts on $\vLam_{M,M}=\vLam_{G,P}$. 
Moreover, from the definition of the slope map one sees that $\phi_P$ is $W_{M,G}$-equivariant. 
The same is true of the projection $\pi:\vLam_{G,P}\to\vLam_{G,G}$. 
Note also that $W_{M,G}$ acts trivially on $\vLam_{G,G}$ and hence on the image of $\phi_G$ as well as on the image of $\pi$. 

Let $w\in W_{M,G}$. From the above we have $\pi(w\vlam_P) = w\pi(\vlam_P) = \vlam_G$ and $\phi_P(w\vlam_P) = w\phi_P(\vlam_P) = w\phi_G(\vlam_G) = \phi_G(\vlam_G)$.

We can apply the uniqueness from Lemma~\ref{the_parabolic} to $\vlam_P$ and $w\vlam_P$ and conclude that $w\vlam_P = \vlam_P$.

Therefore $W_{M,G}$ acts on the stack $\Bun_M^{\vlam_P}$ and from Lemma~\ref{L:act-free} we know that the action is generically free on objects.

By applying Lemma~\ref{L:M-reg iso P-reg}, we see that on an open subset of $\Bun_P^{\vlam_P,\ss,\reg}$ the group $W_{M,G}$ acts freely and the projection $p$ is invariant for this action (since $W_{M,G}$ acts by inner (!) automorphisms on $G$).

Furthermore, since $\Bun_P^{\vlam_P,\ss,\reg}$ dominates
$\Bun_G^{\vlam_G,\ss}$, the same is true for all the irreducible components of $\Bun_P^{\vlam_P,\ss}\times_{\Bun_G^{\vlam_G,\ss}}\Bun_P^{\vlam_P,\ss}$ and hence the smallness.

A proper \'etale map which is generically Galois is actually Galois hence the map $p$ is a Galois covering with Galois group $W_{M,G}$ when restricted to the regular locus.
\end{proof}

\section{Complements}

\subsection{Stable bundles}

In this section the curve $X$ will always be of genus 1.

\begin{prop}\label{prop_stable} If in Lemma~\ref{the_parabolic} $P=G$ then the adjoint group $G^\ad$ is isomorphic to a product $\prod_k \PGL_{n_k}$ and the degree $\vlam_G \cong (d_k)_k$ satisfies $\gcd(d_k,n_k) = 1,\,\forall k$. 
\end{prop}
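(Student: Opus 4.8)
The plan is to unwind the hypothesis $P=G$ from Lemma~\ref{the_parabolic} into a purely root-theoretic condition, reduce it to the simple factors of $G^{\ad}$, and then read off both the type-$A$ conclusion and the coprimality statement.

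First I would note that $P=G$ means $\I_{\vlam_G}=\I$, i.e. $\langle\phi_G(\vlam_G)-\vlam,\om_i\rangle\notin\Z$ for every $i\in\I$, where $\vlam\in\vLam_G$ is a lift of $\vlam_G$. Decompose $\vlam=\vlam^z+\vlam^{\ss}$ into its central part (in $\vLam_{Z(G)}^\Q$) and its coroot part (in the $\Q$-span of the $\valpha_i$). Since $\phi_G$ (for the Levi $M=G$) is the projection onto the central part, we get $\phi_G(\vlam_G)=\vlam^z$, hence $\phi_G(\vlam_G)-\vlam=-\vlam^{\ss}$, and the condition reads $\langle\vlam^{\ss},\om_i\rangle\notin\Z$ for all $i$. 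Replacing $\vlam$ by $\vlam+\valpha_j$ changes each pairing by an integer, so this depends only on the image $\bar\vlam$ of $\vlam_G$ under $\vLam_{G,G}\to\pi_1(G^{\ad})$ (the coweight lattice modulo the coroot lattice $\Z\Phi^\vee$). Under the perfect pairing $\pi_1(G^{\ad})\times(\text{weight lattice}/\Z\Phi)\to\Q/\Z$ the requirement becomes: $\langle\bar\vlam,\bar\om_i\rangle\neq0$ in $\Q/\Z$ for every $i$, where $\bar\om_i$ is the class of the fundamental weight $\om_i$ modulo the root lattice $\Z\Phi$. In particular, a \emph{necessary} condition for $P=G$ is that $\bar\om_i\neq0$ for all $i$, since otherwise the $i$-th coordinate vanishes for \emph{every} degree.

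Next I would split $G^{\ad}=\prod_k G_k$ into simple adjoint factors; the root and weight lattices split as orthogonal direct sums, so the condition $\bar\om_i\neq0$ for all $i$ holds factor by factor. The key step is then the Lie-theoretic fact that a simple factor $G_k$ can satisfy $\bar\om_i\neq0$ for all its nodes only in type $A$. To see this I would exhibit, in each simple type other than $A$, a node $i_0$ with $\om_{i_0}\in\Z\Phi$: for types $B_n, D_n, E_6, E_7, E_8, F_4, G_2$ the adjoint representation is fundamental, so the highest root $\theta=\om_{i_0}$ already lies in the root lattice; for $C_n$ with $n\ge 2$ the adjoint representation is not fundamental ($\theta=2\om_1$), but one checks directly that $\om_2\in\Z\Phi$ (the centre of $\mathrm{Sp}_{2n}$ acts trivially on the relevant wedge power). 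Either way $\bar\om_{i_0}=0$, contradicting $\I_{\vlam_G}=\I$. This verification across the simple types is the main obstacle, though it is an elementary finite check once the problem has been transported to the pairing between $\pi_1(G^{\ad})$ and $\text{weight}/\text{root}$. It follows that every $G_k$ is of type $A$, i.e. $G^{\ad}\cong\prod_k\PGL_{n_k}$.

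Finally, for a type-$A_{n_k-1}$ factor $\PGL_{n_k}$ with $\pi_1=\Z/n_k$, write $\bar\vlam$ as $d_k$ times the generator, where $(d_k)_k$ is the image of $\vlam_G$ in $\prod_k\Z/n_k$. Using the inverse Cartan matrix of $A_{n_k-1}$ the coordinates are
\[
\langle\bar\vlam,\bar\om_j\rangle=\frac{d_k(n_k-j)}{n_k}\pmod{\Z},\qquad j=1,\dots,n_k-1.
\]
These are all nonzero precisely when $n_k\nmid d_kj$ for all $1\le j\le n_k-1$, which holds if and only if $\gcd(d_k,n_k)=1$. Combining the factors gives $\gcd(d_k,n_k)=1$ for all $k$, as claimed. (This also recovers the $\GL_6$ examples in the introduction, where $d=1,5$ give $P=G$ and $d=2,3,4$ do not.)
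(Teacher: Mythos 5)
Your proof is correct and follows essentially the same route as the paper's: the hypothesis $P=G$ is unwound to the non-integrality of $\langle\phi_G(\vlam_G)-\vlam,\om_i\rangle$ for every $i$ (the paper's coefficients $q_i$, the entries of $C^{-1}b$), which forces every fundamental weight to be nonzero modulo the root lattice $\Z\Phi$, and this is checked across the simple types before the explicit $A_{n_k-1}$ computation yields $\gcd(d_k,n_k)=1$. The only difference is in how the finite check is organized: where the paper inspects tables of inverse Cartan matrices, you rule out the non-$A$ factors by noting that the highest root is a fundamental weight (resp.\ that $\om_2\in\Z\Phi$ for $C_n$), a slightly more conceptual way to run the same verification.
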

\begin{proof}
Pick a lift $\vlam\in\vLam_G$ of $\vlam_G$. 
Let us translate in more concrete terms the meaning of the condition 
\begin{align}\label{E:for stable type A}\<\phi_G(\vlam_G)-\vlam,\om_i\>\not\in\Z,\forall i\in\I.
\end{align}

From the definition of the slope map $\phi_G$ the element $\phi_G(\vlam_G)$ has the following properties:
\begin{enumerate}
 \item $\phi_G(\vlam_G) = \vlam-\sum_{i\in\I}q_i \valpha_i$ with $q_i\in\Q,\,\forall i\in I$
 \item $\<\phi_G(\vlam_G),\alpha_i \rangle = 0,\,\forall i\in\I$.
\end{enumerate}
The equation (\ref{E:for stable type A}) implies that none of the $q_i$ above are integers.

If we denote by $C$ the Cartan matrix of the root system of $G$, by $q$ the vector $(q_i)_{i\in\I}$ and by $b$ the vector  $(\<\vlam,\alpha_i\>)_{i\in \I}$ (with integer entries(!) since $\vlam\in\vLam_G$) the second property above can be rewritten as
\[
 q = C^{-1}b.
\]

Since none of the entries of $q$ are integers and all entries of $b$ are integers we infer that on every line of $C^{-1}$ there is at least one non-integer number. 
Inspecting the inverses of Cartan matrices (see, for example, \cite[Table 2, page 295]{OnVin}) one sees immediately that this can happen only if the root system is a product of root systems of type $A$. 

Moreover, from the same tables, if $G^\ad\simeq \PGL_{k}$ in order for equation (\ref{E:for stable type A}) to hold, the degree $\vlam\in\vLam_{G}$ must also satisfy the condition
\[\gcd((\<\vlam,\alpha_i\>)_{i\in I},k) = 1. 
\]

To see why this must be so recall that the entries of the matrix $C^{-1}$, for type $A_{k-1}$ are $c_{i,j} = \frac1ki(k-j),i\le j$ and $c_{i,j} = \frac1k j(k-i),i>j$ where now the indices $i,j$ run in the set $\{1,\dots,k\}$. 
It is not hard to convince oneself that if the above greatest common divisor, say $e$, is at least $2$ then $q_{k/e}\in\Z$.

The above discussion implies that $\vlam_G$ projected to $\vLam_{G^\ad,G^\ad}$ has the form
\[
 d_1\vlam_{1,n_1-1}+d_2\vlam_{2,n_2-1}+\dots 
\]
where $G^\ad\simeq\prod_k \PGL_{n_k}$ and $\gcd(d_k,n_k) = 1,\forall k$ and we denoted by $\vlam_{k,i}$ the (standard) cocharacters for the projective linear group $\PGL_{n_k}$. 
\end{proof}

Here is an immediate combinatorial corollary:
\begin{cor}\label{C:Levi type A} In Lemma~\ref{the_parabolic} the Levi of $P$ must be of type products of type $A$.
\end{cor}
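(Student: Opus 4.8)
The plan is to deduce this from Proposition~\ref{prop_stable} applied not to $G$ but to the Levi $M$ of $P$, together with its degree $\vlam_P$. The crucial point I would first establish is that when one runs Lemma~\ref{the_parabolic} \emph{inside} the group $M$ for the degree $\vlam_P\in\vLam_{M,M}=\vLam_{G,P}$, the resulting smallest parabolic is $M$ itself. Granting this, the hypothesis ``$P=G$'' of Proposition~\ref{prop_stable} is satisfied with $M$ in the role of $G$, and the purely combinatorial Cartan-matrix analysis of that proof yields $M^{\ad}\cong\prod_k\PGL_{n_k}$; since $M$ and $M^{\ad}$ have the same root system, this says exactly that $M$ is of type products of type $A$.

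To prove that the smallest parabolic for the pair $(M,\vlam_P)$ is $M$, I would argue by the minimality of $P$. Standard parabolics of $M$ are in bijection with subsets of $\I_P$ and hence with standard parabolics of $G$ contained in $P$. Suppose the smallest parabolic $P'\subseteq M$ produced by Lemma~\ref{the_parabolic} for $(M,\vlam_P)$ were proper, and let $Q\subseteq P$ be the corresponding parabolic of $G$, with $\I_Q=\I_{P'}\subsetneq\I_P$, so that $Q\subsetneq P$. The essential verification is a set of compatibilities between the two ambient pictures: since $M$ and $G$ share the maximal torus $T$ one has $\vLam_M=\vLam_G$ and $\vLam_{M,P'}=\vLam_{G,Q}$; the Levi of $P'$ in $M$ coincides with the Levi $L$ of $Q$ in $G$, so the two slope maps agree because each factors through the common $\vLam_{Z(L)}^\Q$; likewise $\phi_M=\phi_P$ on $\vLam_{G,P}$, and the projection $\vLam_{G,Q}\to\vLam_{G,P}$ computed in $M$ and in $G$ is the same.

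Transporting the admissible degree $\vlam_{P'}$ produced inside $M$ to an element $\vlam_Q\in\vLam_{G,Q}$, these compatibilities give $\phi_Q(\vlam_Q)=\phi_{P'}(\vlam_{P'})=\phi_M(\vlam_P)=\phi_P(\vlam_P)=\phi_G(\vlam_G)$, while $\vlam_Q$ projects to $\vlam_P$ in $\vLam_{G,P}$ and hence to $\vlam_G$ in $\vLam_{G,G}$. Thus $Q\subsetneq P$ would itself satisfy the two conditions (\ref{E:parab-lemma-slope})--(\ref{E:parab-lemma-proj}) of Lemma~\ref{the_parabolic}, contradicting the minimality of $P$; therefore $P'=M$, and the combinatorial argument of Proposition~\ref{prop_stable} applied to $M$ finishes the proof. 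I expect the only real work to be the bookkeeping of the compatibilities in the middle step — matching the internal-to-$M$ slope map and projection with their $G$-counterparts — after which the appeal to Proposition~\ref{prop_stable} is immediate.
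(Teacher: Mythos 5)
Your argument is correct and is exactly the route the paper intends: the corollary is stated as an immediate consequence of Proposition~\ref{prop_stable}, applied to the pair $(M,\vlam_P)$ once one knows that the smallest parabolic of $M$ for the degree $\vlam_P$ is $M$ itself. Your middle step --- transferring minimality from $G$ to $M$ via the bijection between parabolics of $M$ and parabolics of $G$ contained in $P$, together with the compatibility of the slope maps through the common $\vLam_{Z(L)}^{\Q}$ --- is precisely the bookkeeping the paper leaves implicit, and it checks out.
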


This corollary appears already in \cite{Bon}  as a consequence of the classification theorem \cite[Proposition 2.18]{Bon}. 

The next Corollary, although well-known over the complex numbers (see for example \cite[Proposition 2.9]{FrON2}), follows easily from the above:
\begin{cor}\label{cor_stable}
 There exists a stable $G$-bundle of degree $\vlam_G$ if and only if $G^\ad\simeq\prod_k \PGL_{n_k}$ and $(\vlam_G)^{\ad}\cong (d_k)_k$ satisfies $\gcd(d_k,n_k) = 1,\forall k$. 
\end{cor}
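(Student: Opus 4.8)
The plan is to deduce Corollary~\ref{cor_stable} directly from Proposition~\ref{prop_stable} together with Remark~\ref{R:sstable are stable}. The logical bridge is the following: a stable $G$-bundle of degree $\vlam_G$ exists precisely when every semistable $G$-bundle of that degree is automatically stable, and by Remark~\ref{R:sstable are stable} this happens exactly when the minimal parabolic $P=P_{\vlam_G}$ of Lemma~\ref{the_parabolic} equals $G$. Indeed, if $P=G$ then no semistable bundle of degree $\vlam_G$ admits an admissible reduction to a proper parabolic, so semistability coincides with stability, and since $\Bun_G^{\vlam_G,\ss}$ is nonempty we obtain a stable bundle. Conversely, if $P\subsetneq G$, then Lemma~\ref{the_parabolic} furnishes a degree $\vlam_P$ with $\phi_P(\vlam_P)=\phi_G(\vlam_G)$ and $\pi(\vlam_P)=\vlam_G$; inducing any $M$-bundle of degree $\vlam_P$ (via Lemma~\ref{lem_Msst_Gsst}, taking the induced $M$-bundle semistable) produces a semistable $G$-bundle carrying an admissible reduction to the proper parabolic $P$, hence a \emph{non}-stable semistable bundle. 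One must then check that a stable bundle cannot exist in this component at all, which follows because the generic semistable bundle of this degree has such an admissible reduction.

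Granting this equivalence, the corollary reduces to the purely group-theoretic characterization of when $P_{\vlam_G}=G$, which is exactly the content of Proposition~\ref{prop_stable}. So the second step is to invoke that proposition: $P_{\vlam_G}=G$ forces $G^\ad\simeq\prod_k \PGL_{n_k}$ with $(\vlam_G)^\ad\cong(d_k)_k$ satisfying $\gcd(d_k,n_k)=1$ for all $k$. Conversely, I would check that whenever $G^\ad$ has this product form with coprime degrees, the condition $\<\phi_G(\vlam_G)-\vlam,\om_i\>\notin\Z$ holds for every $i\in\I$, so that $\I_{\vlam_G}=\I$ and $P=G$; this is the reverse reading of the Cartan-matrix computation in the proof of Proposition~\ref{prop_stable}, where one uses the explicit entries $c_{i,j}$ of $C^{-1}$ for type $A_{k-1}$ to see that coprimality is precisely what keeps all the $q_i$ non-integral.

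The main obstacle I anticipate is the subtle point in the ``only if'' direction, namely ruling out the existence of any stable bundle when $P\subsetneq G$. It is not quite enough to exhibit one semistable-but-not-stable bundle; one must argue that \emph{no} bundle in the component $\Bun_G^{\vlam_G,\ss}$ is stable. The cleanest way is to appeal to the main theorem: Theorem~\ref{thm_main} shows $p\colon\Bun_P^{\vlam_P,\ss}\to\Bun_G^{\vlam_G,\ss}$ is small and generically an $W_{M,G}$-Galois cover, so it is in particular dominant, meaning every $G$-bundle in this component admits a reduction to $P$ of degree $\vlam_P$, which is admissible by construction and hence destabilizes (in the non-strict sense) any such bundle. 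Since $P$ is proper, this reduction witnesses non-stability for every point, so indeed no stable bundle exists.

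In summary, the proof is a two-line assembly: \emph{(i)} stable bundles of degree $\vlam_G$ exist iff $P_{\vlam_G}=G$ (by Remark~\ref{R:sstable are stable} and the dominance in Theorem~\ref{thm_main}), and \emph{(ii)} $P_{\vlam_G}=G$ iff $G^\ad\simeq\prod_k\PGL_{n_k}$ with coprime degrees (Proposition~\ref{prop_stable}). Since the statement has already been ``followed easily from the above,'' I expect the author's proof to be correspondingly short, essentially just chaining these two facts together.
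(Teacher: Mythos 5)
Your proposal is correct and takes essentially the same route as the paper's one-line proof, which likewise chains Remark~\ref{R:sstable are stable} and Theorem~\ref{thm_main} (giving surjectivity of the induction map, hence non-stability of every bundle when $P\subsetneq G$) with the Cartan-matrix characterization of $P_{\vlam_G}=G$ from Proposition~\ref{prop_stable} and Corollary~\ref{C:Levi type A}. Your explicit treatment of the ``only if'' direction via dominance is exactly the intended reading of the paper's terse citation.
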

\begin{proof}
 Follows immediately from Corollary~\ref{C:Levi type A} and Theorem~\ref{thm_main} (see also Remark~\ref{R:sstable are stable}).
\end{proof}

\subsection{The Levi subgroups and the relative Weyl groups}\label{ss:levi-table}
In this subsection we provide a table with the Levi subgroups $M$, as well as their relative Weyl groups $W_{M,G}$, that come out from Lemma~\ref{the_parabolic} (cf. \cite[Table 2.17]{Bon}). 
For conciseness we do not write the Levi corresponding to degree 0 since they are always equal to the maximal torus.

\bigskip

\begin{center}
\begin{tabular}[b]{|c|c|c|c|c|}
\hline
$G$ & deg & \begin{tabular}{c}Type\\of M\end{tabular} & \begin{tabular}{c}Diagram \\ of $(G,M)$\end{tabular} & \begin{tabular}{c} Type of\\ $W_{M,G}$ \end{tabular}\\
\hline
\hline
$A_{n-1}$ & d & \begin{tabular}{c}$A_{n/e-1}\times\dots\times A_{n/e-1}$\\ $e=\gcd(n,d)$\end{tabular} & $\boxed{A_{n/e-1}}\!-\circ-\boxed{A_{n/e-1}}\!-\!\circ\dots-\circ-\!\boxed{A_{n/e-1}}$ & $A_{e-1}$\\
\hline
\hline
$B_n$ & 1 & ${A_1}^{\vphantom{\sum}}_{\vphantom{\sum}}$ & $\circ$---$\circ$---$\circ$---$\circ\dots\circ$---$\circ\!\Longrightarrow\!\!\bullet$ & $C_{n-1}$\\
\hline
\hline
$C_{2n}$ & 1 & $\underbrace{A_1\times A_1\cdots \times A_1}_{n}^{\vphantom{\sum}}$ & $\bullet$---$\circ$---$\bullet$---$\circ\cdots\cdots\bullet\!\!\Longleftarrow\!\!\circ$ & $C_n$\\
\hline
$C_{2n+1}$ & 1 & $\underbrace{A_1\times A_1\dots \times A_1}_{n+1}^{\vphantom{\sum}}$ & $\bullet$---$\circ$---$\bullet$---$\circ \cdots \cdots\circ\!\!\Longleftarrow\!\!\bullet$ & $C_n$\\
\hline
\hline
\multirow{4}{*}{$D_{2n+1}$} & 1 &$A_1\times \dots\times A_1\times A_3$  & $\xy
\POS(0,0) *{\bullet}="a",
\POS(10,0) *\cir<2pt>{}="b",
\POS(20,0) *{\bullet}="c",
\POS(30,0) *\cir<2pt>{}="x"
\POS(40,0) *{\bullet}="d",
\POS(50,5) *{\bullet}="e",
\POS(50,-5) *{\bullet}="f",

\POS(50,-7) *{}="tt"
\POS(50,7) *{}= "ss"

\POS "a" \ar@{-}^<<<<{} "b",
\POS "b" \ar@{-}^<<<<{} "c",
\POS "c" \ar@{.}^<<<<{} "x",
\POS "x" \ar@{-}^<<<<{} "d",
\POS "d" \ar@{-}^<<<<{} "e",
\POS "d" \ar@{-}^<<<<{} "f",
\endxy$ 
 & $C_{n-1}$ \\\cline{2-5}

& 2 &$A_1\times A_1$  & $\xy
\POS(0,0) *\cir<2pt>{}="a",
\POS(10,0) *\cir<2pt>{}="b",
\POS(20,0) *\cir<2pt>{}="c",
\POS(30,0) *\cir<2pt>{}="x"
\POS(40,0) *\cir<2pt>{}="d",
\POS(50,5) *{\bullet}="e",
\POS(50,-5) *{\bullet}="f",

\POS(50,-7) *{}="tt"
\POS(50,7) *{}= "ss"

\POS "a" \ar@{-}^<<<<{} "b",
\POS "b" \ar@{-}^<<<<{} "c",
\POS "c" \ar@{.}^<<<<{} "x",
\POS "x" \ar@{-}^<<<<{} "d",
\POS "d" \ar@{-}^<<<<{} "e",
\POS "d" \ar@{-}^<<<<{} "f",
\endxy$ 
 & $C_{n-1}$\\

\hline
\multirow{6}{*}{$D_{2n}$} & (1,0) &$A_1\times\dots\times A_1$ & $\xy
\POS(0,0) *{\bullet}="a",
\POS(10,0) *\cir<2pt>{}="b",
\POS(20,0) *{\bullet}="c",
\POS(30,0) *{\bullet}="x"
\POS(40,0) *\cir<2pt>{}="d",
\POS(50,5) *{\bullet}="e",
\POS(50,-5) *\cir<2pt>{}="f",
\POS(50,-7) *{}="tt"
\POS(50,7) *{}= "ss"

\POS "a" \ar@{-}^<<<<{} "b",
\POS "b" \ar@{-}^<<<<{} "c",
\POS "c" \ar@{.}^<<<<{} "x",
\POS "x" \ar@{-}^<<<<{} "d",
\POS "d" \ar@{-}^<<<<{} "e",
\POS "d" \ar@{-}^<<<<{} "f",
\endxy$ 
 & \begin{tabular}{c}$B_n$
\end{tabular}	\\\cline{2-5}
&(0,1) & $A_1\times A_1$ & $\xy
\POS(0,0) *\cir<2pt>{}="a",
\POS(10,0) *\cir<2pt>{}="b",
\POS(20,0) *\cir<2pt>{}="c",
\POS(30,0) *\cir<2pt>{}="x"
\POS(40,0) *\cir<2pt>{}="d",
\POS(50,5) *{\bullet}="e",
\POS(50,-5) *{\bullet}="f",
\POS(50,-7) *{}="tt"
\POS(50,7) *{}= "ss"

\POS "a" \ar@{-}^<<<<{} "b",
\POS "b" \ar@{-}^<<<<{} "c",
\POS "c" \ar@{.}^<<<<{} "x",
\POS "x" \ar@{-}^<<<<{} "d",
\POS "d" \ar@{-}^<<<<{} "e",
\POS "d" \ar@{-}^<<<<{} "f",
\endxy$ 
 & \begin{tabular}{c}$C_{2n-2}$
\end{tabular}	\\\cline{2-5}
 & (1,1) & $A_1\times\dots\times A_1$ & $\xy
\POS(0,0) *{\bullet}="a",
\POS(10,0) *\cir<2pt>{}="b",
\POS(20,0) *{\bullet}="c",
\POS(30,0) *{\bullet}="x"
\POS(40,0) *\cir<2pt>{}="d",
\POS(50,5) *\cir<2pt>{}="e",
\POS(50,-5) *{\bullet}="f",
\POS(50,-7) *{}="tt"
\POS(50,7) *{}= "ss"

\POS "a" \ar@{-}^<<<<{} "b",
\POS "b" \ar@{-}^<<<<{} "c",
\POS "c" \ar@{.}^<<<<{} "x",
\POS "x" \ar@{-}^<<<<{} "d",
\POS "d" \ar@{-}^<<<<{} "e",
\POS "d" \ar@{-}^<<<<{} "f",
\endxy$
 & $C_n$	\\
\hline
\hline
\multirow{3}{*}{$E_6$} & \multirow{3}{*}{1} & \multirow{3}{*}{$A_2\times A_2$} & 
$\xy 
\POS (-10,0) ="z",
\POS (0,0) *{\bullet} ="a" ,
\POS (10,0) *{\bullet} ="b" ,
\POS (20,-10) *\cir<2pt>{} ="c" ,
\POS(20,-13) *{}="tt"
\POS (20,0) *\cir<2pt>{} ="d" ,
\POS (30,0) *{\bullet} ="e" ,
\POS (40,0) *{\bullet} ="f" ,
\POS (50,0)  ="h", 

\POS "a" \ar@{-}^<<<{}_<<{}  "b",
\POS "b" \ar@{-}^<<<{}_<<{}  "d",
\POS "c" \ar@{-}^<<{}_<<{}  "d",
\POS "d" \ar@{-}^<<<{}_<<{}  "e",
\POS "e" \ar@{-}^<<<{}_<<{}  "f",
\POS "f" \ar@{}^<<<{}_<<{}  "h",
\endxy$
 & \multirow{3}{*}{$G_2$} \\
\hline
\hline
\multirow{3}{*}{$E_7$} & \multirow{3}{*}{1} & \multirow{3}{*}{$A_1\times A_1\times A_1$} & 
$\xy 
\POS (-10,0) ="z",
\POS (0,0) *\cir<2pt>{} ="a" ,
\POS (10,0) *\cir<2pt>{} ="b" ,
\POS (20,-10) *{\bullet} ="c" ,
\POS(20,-13) *{}="tt"
\POS (20,0) *\cir<2pt>{} ="d" ,
\POS (30,0) *{\bullet} ="e" ,
\POS (40,0) *\cir<2pt>{} ="f" ,
\POS (50,0)  *{\bullet}="h"

\POS "a" \ar@{-}^<<<{}_<<{}  "b",
\POS "b" \ar@{-}^<<<{}_<<{}  "d",
\POS "c" \ar@{-}^<<{}_<<{}  "d",
\POS "d" \ar@{-}^<<<{}_<<{}  "e",
\POS "e" \ar@{-}^<<<{}_<<{}  "f",
\POS "f" \ar@{-}^<<<{}_<<{}  "h"
\endxy$

&\multirow{3}{*}{$F_4$}\\
\hline
\end{tabular}
\end{center}

\bigskip

\section*{Acknoledgements} This work is part of my PhD thesis. I would like to express my gratitude to my advisor, Olivier Schiffmann, for suggesting this project and for his constant help and support. I would like to thank the Fondation des Sciences Math\'ematiques de Paris for the grant offered to visit Harvard University in the Fall of 2013 where much of this work was done. I'm also very grateful to Dennis Gaitsgory who agreed for me to visit him at Harvard. This work benefited a lot from discussions with him, Sam Raskin and Simon Schieder whom I warmly thank. I would like to thank the referee for pointing out a gap in the proof that lead to Section~\ref{S:Weyl gen free}. 
I warmly thank Michael Gr\"ochenig for reading an early draft and for his comments. 


\bibliographystyle{acm} 

\def\cprime{$'$}

\end{document}